\patchcmd\Gread@eps{\@inputcheck#1 }{\@inputcheck"#1"\relax}{}{}
\newtheorem{theorem}{Theorem}[section]
\newtheorem{lemma}[theorem]{Lemma}
\newtheorem{definition}[theorem]{Definition}
\newcommand{\proof}{\noindent{\bf Proof.\ }}
\newcommand{\qed}{\hfill $\square$\medskip}
\begin{document}

\title{On the semitotal dominating sets of graphs}

\author{
Saeid Alikhani$^{}$\footnote{Corresponding author} \and Hassan Zaherifar
}

\date{\today}

\maketitle

\begin{center}
Department of Mathematics, Yazd University, 89195-741, Yazd, Iran\\
{\tt   alikhani@yazd.ac.ir }
\end{center}

\begin{abstract}
A set $D$ of vertices in an isolate-free graph $G$ is a semitotal dominating set of $G$ if $D$ is a dominating set of $G$ and every vertex in $D$ is within distance $2$ from  another vertex of $D$.
The semitotal domination number of $G$ is the minimum cardinality of a semitotal dominating set of $G$ and is denoted by $\gamma_{t2}(G)$. In this paper after computation of  semitotal domination number of specific  graphs, we count the number of this kind of dominating sets of arbitrary size in some graphs.  	
\end{abstract}

\noindent{\bf Keywords:} Dominating set, semitotal domination number, product.  

\medskip
\noindent{\bf AMS Subj.\ Class.:} 05C15, 05C25.

\section{Introduction}

A dominating set of a graph $G=(V,E)$ is any subset $S$ of $V$ such that every vertex not in $S$ is adjacent to at least one member of $S$.  
The minimum cardinality of all dominating sets of $G$ is called  the  domination number of $G$ and is denoted by $\gamma(G)$. This parameter has  been extensively studied in the literature and there are  hundreds of papers concerned with domination.  
For a detailed treatment of domination theory, the reader is referred to \cite{domination}. Also, the concept of domination and related invariants have
been generalized in many ways. Among the best know generalizations are total, independent, and connected dominating, each of them with the corresponding domination number. Most of the papers published so far deal with structural
aspects of domination, trying to determine exact expressions for $\gamma(G)$  or some upper and/or lower bounds for it. There were no paper concerned with the 
enumerative side of the problem by 2008.  
Regarding to enumerative side of dominating sets,  Alikhani and Peng \cite{saeid1}, have introduced the domination polynomial of a graph. The domination polynomial of graph $G$ is the  generating function for the number of dominating sets of  $G$, i.e., $D(G,x)=\sum_{ i=1}^{|V(G)|} d(G,i) x^{i}$ (see \cite{euro,saeid1}).   This  polynomial and its roots has been actively studied in recent
years (see for example \cite{Filomat}). 

It is natural to count the number of another kind of dominating sets (\cite{utilitas,weakly}).    Motivated by these papers, we consider another type of dominating set of a graph in this paper.  

A total dominating set, abbreviated a TD-set, of a graph $G$ with no isolated
vertex is a set $D$ of vertices of $G$ such that every vertex in $V(G)$ is adjacent to at
least one vertex in $D$. The total domination number of $G$, denoted by $\gamma_t(G)$, is
the minimum cardinality of a TD-set of $G$. Total domination is now well studied
in graph theory. The literature on the subject of total domination in graphs has
been surveyed and detailed in a  book.  
A set $D$ of vertices in an isolate-free graph $G$ is a semitotal dominating set of $G$ if $D$ is a dominating set of $G$ and every vertex in $D$ is within distance $2$ from  another vertex of $D$.
The semitotal domination was introduced by Goddard, Henning and McPillan \cite{Goddard}, and studied further in \cite{semi1,semi2} and elsewhere.

The semitotal domination number of $G$ is the minimum cardinality of a semitotal dominating set of $G$ and is denoted by $\gamma_{t2}(G)$. By the definition it is easy  to see that for any graph $G$ with no isolated vertices, $\gamma(G)\leq \gamma_{t2}(G)\leq \gamma_t(G)$. Straight from the definition we see that $\gamma_{t2}(G)\geq 2$ but in this paper we consider $\gamma_{t2}(K_n)=1$. 
Recently, Henning, Pal and Pradhan \cite{DMGT} studied the semitotal domination number in block graphs. 
They presented a linear time algorithm to
compute a minimum semitotal dominating set in block graphs. Also they studied the complexity of the semitotal domination problem. 
A domination-critical (domination-super critical, respectively) vertex in a graph
$G$ is a vertex whose removal decreases (increases, respectively) the domination
number. Bauer et al. \cite{Bauer}  introduced the concept of domination stability in graphs.
The domination stability, or just $\gamma$-stability, of a graph $G$ is the minimum number
of vertices whose removal changes the domination number. Motivated by domination stability, we consider  the semi-total stability of a graph.  %Also we  study  the edge semi-total  stability number (semi-total bondage number) of $G$ and compute it for some specific graphs.

In Section 2, we compute the semitotal domination number of specific  graphs. In Section 3, we count the number of semitotal  dominating sets of arbitrary size in some graphs.  Finally in Section 4, we introduce  semitotal domination stability of a graph and compute it for some graphs.

\section{Semitotal domination number of specific graphs} 

In this section, we study  the semitotal domination number of some specific graphs. Here, we recall some graph products. The {\it corona product} $G\circ H$ of two graphs $G$ and $H$ is defined as the graph obtained by taking one copy of $G$ and $\vert V(G)\vert $ copies of $H$ and joining the $i$-th vertex of $G$ to every vertex in the $i$-th copy of $H$. 
The Cartesian product of graphs $G$ and $H$ is a graph denoted $G\Box H$ whose
vertex set is $V (G) \times  V (H)$. Two vertices $(g, h)$ and $(g', h')$ are adjacent if
either $g = g'$ and $hh'\in  E(H)$, or $gg' \in  E(G)$ and $h = h'$.
The {\it join} of two graphs $G_1$ and $G_2$, denoted by $G_1\vee G_2$,
is a graph with vertex set  $V(G_1)\cup V(G_2)$
and edge set $E(G_1)\cup E(G_2)\cup \{uv| u\in V(G_1)$ and $v\in V(G_2)\}$. 
We begin with computation of the semi-total domination number of specific graphs which is straightforward to compute.  

\begin{theorem} \label{Thm1}
	\begin{enumerate}
		\item[(i)]  
		For every $n\geq 3$, $\gamma_{t2}(P_{n})=\gamma_{t2}(C_{n})=\lceil \frac{2n}{5}\rceil$. 
		
		\item[(ii)]  If $W_n$ is a wheel of order $n$, then $\gamma_{t2}(W_n)=\lceil \frac{n-1}{3}\rceil$. 
		\item[(iii)] 
		If $F_n$ is a friendship graph (join of $K_1$ and $nK_2$), then $\gamma_{t2}(F_n)=n.$
		
		\item[(iv)] If $B_n$ is a book graph (the Cartesian product $K_{1,n}\square P_2$), then
		$\gamma_{t2}(B_n)=n+1$.
		\item[(v)]
		$		\gamma_{t2}(K_{m,n})=\left\{
		\begin{array}{ll}
		min\{m,n\}& 2\leq n,m\leq 4\\
		4& m,n\geq 5
		\end{array}\right.$ 
	\end{enumerate} 
\end{theorem}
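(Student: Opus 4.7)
The plan is to treat each of the five parts by exhibiting an explicit semitotal dominating set of the stated size and then proving a matching lower bound, where the upper bounds are constructive and the lower bounds come from counting how many vertices a semitotal dominating set can cover per vertex or per ``pair.'' For part (i) on $P_n$ and $C_n$, I would use a block pattern of period $5$: on each segment of five consecutive vertices I select those in positions $2$ and $4$, which lie at distance $2$ and together dominate all five; this gives a semitotal dominating set of size $\lceil 2n/5\rceil$ after handling the residue $n\bmod 5$ at the end of the path (or the seam, for the cycle). For the lower bound I would use the standard pairing argument: in any semitotal dominating set $D$, each vertex has a partner in $D$ at distance at most $2$, and any two vertices of $P_n$ or $C_n$ at distance $\le 2$ jointly cover at most $5$ vertices, so pairing up vertices of $D$ and summing gives $n \le \tfrac{5}{2}|D|$, that is, $|D|\ge \lceil 2n/5\rceil$.

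For part (ii), on the wheel $W_n$ I would construct a semitotal dominating set on the rim by picking every third rim vertex, obtaining $\lceil(n-1)/3\rceil$ vertices; two consecutive chosen rim vertices are joined through the hub at distance $2$, so the semitotal condition holds, and the hub is automatically dominated by any rim vertex. The matching lower bound follows from the fact that each chosen vertex covers at most three rim vertices.

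Parts (iii), (iv) and (v) are handled by a block-by-block analysis. For the friendship graph $F_n$, which consists of $n$ triangles glued at the hub, I would select one leaf from each triangle, forming a set of size $n$, and note that any two such leaves lie at distance $2$ through the hub so the semitotal condition is automatic; the lower bound follows by showing that a semitotal dominating set must visit each triangle and that the distance constraint prevents dominating several triangles from a single vertex outside the hub. For the book graph $B_n$, a similar page-by-page argument yields $n+1$. For $K_{m,n}$ I would distinguish the two regimes: when $\min\{m,n\}\le 4$ a minimum dominating set of size $\min\{m,n\}$ can be arranged to satisfy the distance-$2$ requirement, whereas when $m,n\ge 5$ the semitotal constraint forces two vertices on each side, giving $4$. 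The main obstacle will be the lower bounds in (iii)--(v): one has to rule out small semitotal dominating sets that try to exploit the hub (in $F_n$) or the spine (in $B_n$) to link several blocks at distance at most $2$, and I expect the argument to hinge on showing that although these central structures are cheap to dominate from, the semitotal partner requirement prevents any savings below the stated bounds.
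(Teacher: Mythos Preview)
The paper gives no proof of this theorem; it merely announces the values as ``straightforward to compute.''  Your treatment of part~(i) is correct and is the standard argument: the period-$5$ pattern gives the upper bound, and the lower bound follows by grouping $D$ into maximal runs of vertices at mutual distance $\le 2$ along the path or cycle (a run of size $c\ge 2$ dominates at most $2c+1$ vertices, so $n\le\sum_i(2c_i+1)\le\tfrac{5}{2}|D|$), which is your pairing idea made precise.

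Your plan for parts (ii)--(v), however, cannot be completed, because under the definition you are using (``within distance~$2$'') the asserted values are false.  In $W_n$, $F_n$, and $B_n$ the hub (respectively the spine edge $\{c_1,c_2\}$ of $B_n$) together with a single neighbour is already a dominating set whose two members are adjacent, hence a semitotal dominating set of size~$2$; in $K_{m,n}$ one vertex from each part does the same.  Thus $\gamma_{t2}=2$ in all of these families, not $\lceil(n-1)/3\rceil$, $n$, $n+1$, or $\min\{m,n,4\}$.  Your closing paragraph correctly identifies the hub as the danger point, but the expectation that ``the semitotal partner requirement prevents any savings below the stated bounds'' is exactly backwards: the hub already has a partner at distance~$1$, so the requirement is satisfied for free and no lower-bound argument of the type you describe can work.

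What is actually going on is that the paper, despite stating the usual definition, tacitly works with the variant in which each $v\in D$ must have a witness at distance \emph{exactly}~$2$; this becomes explicit later, where it is argued that the centre of $K_{1,n}$ can never lie in a semitotal dominating set ``since the distance between central vertex with other vertices is equal one.''  Under that convention the hub is forced out of $D$ in $W_n$ and $F_n$, a set meeting only one side of $K_{m,n}$ must be that whole side, and your block-by-block lower bounds for (ii), (iii) and (v) then go through essentially as you sketched.  Without adopting that nonstandard reading, no proof strategy can establish (ii)--(v).
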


The following theorem gives the difference between $\gamma(G)$ and $\gamma_{t_2}(G)$ for certain graphs, which are easy to prove.

	\begin{theorem} \label{Thm2.2}
		\begin{enumerate}
			\item[(i)] 	\begin{equation*}
			\gamma_{t2}(P_n)- \gamma(P_n)=\left\{
			\begin{array}{ll}
			0 &n=4,5,7,10\\
			1 &6\leqslant n \leqslant 22 ,\quad n\neq 7,10,21,18\\
			2 &23\leqslant n\leqslant 37,\quad n\neq 25,33,36\\
			3 &38\leq n\leq 52,\quad n\neq 40,48,51\\
			4 & 53\leq n\leq 67,\quad n\neq 55,63,66\\
			5 & 68\leq n\leq 82,\quad n\neq 70,78,81\\
			\geq 6 & n\geq83,\quad n\neq 85
			\end{array}\right. 
			\end{equation*}
								
			\item[(ii)]  
			For the Petersen graph $P$,  $\gamma_{t2}(P)=\gamma(P)$.
			
			\item[(iii)] 			$\gamma_{t2}(B_n)=\gamma(B_n)+n-1.$
			
			\item[(iv)]  $\gamma_{t2}(F_n)-\gamma(F_n)=n-1.$
			\item[(v)]  For the star graph $S_n=K_{1,n}$,  
			$\gamma_{t2}(S_n)-\gamma(S_n)=n-1.$ 
			\item[(vi)] 	$\gamma_{t2}(W_n)-\gamma(W_n)=\lceil \frac{n-1}{3}\rceil-1.$ 
				\item[(vii)] 
				For the complete bipartite graph $K_{m,n}$ with $m\leq n$, 
				\begin{equation*}
				\gamma_{t2}(K_{m,n})-\gamma(K_{m,n})=\left\{
				\begin{array}{ll}
				m-2& 2\leq m\leq 4\\
				2& m\geq 5
				\end{array}\right. 
				\end{equation*}
				\end{enumerate} 
	\end{theorem}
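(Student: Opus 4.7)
All seven parts reduce to computing the difference between the $\gamma_{t2}$-values supplied by Theorem~\ref{Thm1} and the ordinary domination number $\gamma(\cdot)$, which in each family is classical. My plan is therefore uniform: record $\gamma$ in each case, then subtract the corresponding value from Theorem~\ref{Thm1}.

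For (i), the starting point is the classical formula $\gamma(P_n)=\lceil n/3\rceil$ together with Theorem~\ref{Thm1}(i)'s $\gamma_{t2}(P_n)=\lceil 2n/5\rceil$. The function $f(n)=\lceil 2n/5\rceil-\lceil n/3\rceil$ depends only on $n\bmod 15$, and since $\tfrac{2n}{5}-\tfrac{n}{3}=\tfrac{n}{15}$, it grows by exactly $1$ every $15$ steps. I would tabulate $f$ on a single residue block, identify within each successive group of $15$ consecutive integers the three residues at which $f$ lags one unit behind the running maximum, and then read off the piecewise description: those lagging residues produce precisely the exceptional sets $\{7,10,18,21\}$, $\{25,33,36\}$, $\{40,48,51\}$, and their analogues in the subsequent rows. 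This is the bookkeeping-heavy step, but conceptually routine.

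For (ii), I would invoke $\gamma(P)=3$ (well known for the Petersen graph $P$) and exhibit one explicit semitotal dominating set of size $3$. Labelling $P$ with outer cycle $u_0u_1u_2u_3u_4$, inner pentagram $v_0,\dots,v_4$ having edges $v_iv_{i+2}\bmod 5$, and spokes $u_iv_i$, the set $D=\{u_0,v_2,v_3\}$ dominates every vertex (each non-$D$ vertex is adjacent to $u_0$, $v_2$, or $v_3$ via a spoke or a pentagram edge), and all three pairwise distances inside $D$ equal $2$ via the common neighbor $v_0$, whose neighborhood is $\{u_0,v_2,v_3\}$. Hence $\gamma_{t2}(P)\le 3$, and the reverse inequality is immediate from $\gamma\le\gamma_{t2}$.

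For (iii)--(vii) the needed domination numbers are one-line computations: $\gamma(B_n)=2$ (the two ``hub'' vertices of the two copies of $K_{1,n}$ in $K_{1,n}\Box P_2$), $\gamma(F_n)=\gamma(W_n)=\gamma(S_n)=1$ (each graph has a universal vertex: the center of the friendship graph, the hub of the wheel, the center of the star), and $\gamma(K_{m,n})=2$ for $m,n\ge 2$ (one vertex from each part). Subtracting these from the $\gamma_{t2}$-values recorded in Theorem~\ref{Thm1} yields each claimed difference directly. The only genuinely case-heavy portion of the theorem is the path formula in~(i); everything else is immediate.
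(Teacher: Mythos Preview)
Your proposal is correct and matches the paper's approach---in fact the paper supplies no proof at all here, merely remarking that the results ``are easy to prove.'' Your plan of subtracting the classical $\gamma$-values from the $\gamma_{t2}$-values of Theorem~\ref{Thm1} is exactly the implicit reasoning, and your explicit semitotal dominating set for the Petersen graph already goes beyond what the paper provides.
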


		The following theorem is about the semitotal domination number of corona and join products of two graphs. 
	
	\begin{theorem} 
	\begin{enumerate}
	\item[(i)]  
	If $G_1$ and $G_2$ are two graphs, then 
	$$\gamma_{t2}(G_1\circ  G_2)\leq\gamma_{t2}(G_1)+\gamma_{t2}(G_2)\times(|V(G_1)|-\gamma_{t2}(G_1)).$$ 
	Moreover, this inequality is sharp, when $G_2$ is a complete graph. 
	
	\item[(ii)]  For two graphs $G$ and $H$ (which are not complete graphs) of order at least three, 
	$$ \gamma_{t2}(G\vee H)=min\{\gamma_{t2}(G), \gamma_{t2}(H),4\}$$  
	\end{enumerate}
	\end{theorem}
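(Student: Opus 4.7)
My plan for part (i) is to construct an explicit semitotal dominating set of $G_1\circ G_2$ by combining a minimum SDS of $G_1$ with copies of a minimum SDS of $G_2$ placed inside each attached copy not rooted at a chosen vertex. Concretely, let $D_1$ be a minimum SDS of $G_1$ and, for each $v\in V(G_1)\setminus D_1$, let $D_2^{(v)}$ be a minimum SDS of the copy $G_2^{(v)}$ attached at $v$; set $D=D_1\cup \bigcup_{v\notin D_1} D_2^{(v)}$. Domination is straightforward: roots in $D_1$ cover themselves and their attached copies; each root $v\in V(G_1)\setminus D_1$ is dominated by $D_1$; the copy $G_2^{(v)}$ for $v\notin D_1$ is dominated by $D_2^{(v)}$. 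For the semitotal condition, vertices of $D_1$ pair among themselves via the SDS property in $G_1$, while vertices of $D_2^{(v)}$ pair via the SDS property in $G_2$ (distances only shrink when passing to the corona). The degenerate case $|D_2^{(v)}|=1$, which only arises when $G_2$ is complete, is handled because that single vertex is at distance at most $2$ from a vertex of $D_1$ via $v$. Counting yields $|D|=\gamma_{t2}(G_1)+\gamma_{t2}(G_2)(|V(G_1)|-\gamma_{t2}(G_1))$, as required.

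For the sharpness assertion with $G_2=K_n$, the upper bound collapses to $|V(G_1)|$ (since $\gamma_{t2}(K_n)=1$ by the paper's convention). A matching lower bound follows from the observation that vertices in each copy $K_n^{(v)}$ have neighbours only inside $\{v\}\cup V(K_n^{(v)})$, so every dominating set (in particular every SDS) must intersect this set, forcing $|D|\geq |V(G_1)|$.

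For part (ii), the inequality $\gamma_{t2}(G\vee H)\leq \min\{\gamma_{t2}(G),\gamma_{t2}(H),4\}$ is the easy direction. A minimum SDS $D_G$ of $G$ also dominates $V(H)$ in the join because every vertex of $G$ is adjacent to every vertex of $H$; non-completeness of $G$ forces $|D_G|\geq 2$, and since any two vertices of $V(G)$ lie at distance at most $2$ in $G\vee H$ through any vertex of $H$, the semitotal condition is preserved. Symmetrically $\gamma_{t2}(G\vee H)\leq \gamma_{t2}(H)$, and choosing two vertices from each side yields an SDS of size $4$ since $G\vee H$ has diameter at most $2$.

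The reverse inequality is the delicate direction, and is where I expect the bulk of the work to lie. My plan is to take a minimum SDS $D$ of $G\vee H$, split it as $D=D_G\sqcup D_H$ with $D_G=D\cap V(G)$ and $D_H=D\cap V(H)$, and run a case analysis according to which of $D_G$, $D_H$ is empty and how large each part is. The main obstacle is that, because $G\vee H$ has diameter at most $2$, the semitotal condition inside the join is automatic as soon as $|D|\geq 2$; a minimum SDS of $G\vee H$ that happens to be concentrated in $V(G)$ need therefore only be a dominating set of $G$, not a semitotal dominating set. Bridging this gap and recovering the quantity $\gamma_{t2}(G)$ rather than merely $\gamma(G)$ in the lower bound seems to require exploiting the hypotheses that neither $G$ nor $H$ is complete and that both have order at least three, ruling out small-size configurations by hand. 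This matching step is the one I expect to be the principal technical hurdle.
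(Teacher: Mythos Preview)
For part (i), your construction is exactly the paper's: take a minimum semitotal dominating set $D_1$ of $G_1$ and, for each $v\notin D_1$, a minimum semitotal dominating set of the attached copy $G_2^{(v)}$. Your write-up is in fact more careful than the paper's --- you verify the semitotal condition explicitly (including the degenerate case $|D_2^{(v)}|=1$) and you supply an argument for the sharpness claim, neither of which the paper does.

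For part (ii), your upper-bound argument is again essentially the paper's, and the paper's proof \emph{stops} at the upper bound: after observing that an SDS of $G$ (or of $H$) is an SDS of $G\vee H$ and that four vertices suffice, it simply writes ``So we have the result'' with no discussion of the reverse inequality. You are right to isolate the lower bound as the real issue, but your planned case analysis cannot succeed, because the equality as stated is false. Under the hypotheses (both $G$ and $H$ non-complete, of order at least three), picking any $g\in V(G)$ and $h\in V(H)$ gives a two-element semitotal dominating set of $G\vee H$: $g$ dominates all of $V(H)$, $h$ dominates all of $V(G)$, and $g,h$ are adjacent. Since $G\vee H$ is not complete, $\gamma_{t2}(G\vee H)=2$ identically. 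For instance $G=H=C_6$ yields $\gamma_{t2}(G\vee H)=2$ while $\min\{\gamma_{t2}(G),\gamma_{t2}(H),4\}=3$. The very obstacle you flagged --- that a minimum SDS of the join concentrated in $V(G)$ need only be a \emph{dominating} set of $G$, not a semitotal one --- is exactly why the lower bound fails; it is not a technical hurdle to be overcome but a genuine counterexample.
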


	\proof
	\begin{enumerate}
	\item[(i)]   By the construction of $G_1\circ G_2$, the vertices in the semitotal dominating set of $G_1$
	covers all the verticies of copies of $G_2$ which adjacent to them. Suppose that $D$ is a semitotal dominating set of $G_1$. Every vertex in $V(G_1)\setminus D$ adjacent to one copy of $G_2$, and  therefore, these  vertices cover by the semitotal dominating set of $G_2$. So
	 $\gamma_{t2}(G_1\circ G_2)\leq \gamma_{t2}(G_1)+\gamma_{t2}(G_2)\times(|V(G_1)|-\gamma_{t2}(G_1)).$
	
	\item[(ii)]     By the construction of $G\vee H$, all of the vertices of $G$ are adjacent to all of the vertices of $H$, and therefore any semitotal dominating set of $G$ is a semitotal dominating set of $G\vee H$ and also any semitotal dominating set of $H$ is a semitotal dominating set of $G\vee H$. If $\gamma_{t2}(G)>4$, $\gamma_{t2}(H)>4$,
	then we can cover all vertices of $G\vee H$ by four vertices. So we have the result.\qed
	\end{enumerate}
	\begin{theorem}
	If $G$ is a complete graph and $H$ is an arbitrary graph (which is not complete graph), then 
	$$ \gamma_{t2}(G\vee H)=\gamma_{t2}(H).$$
	\end{theorem}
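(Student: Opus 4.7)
The plan is to prove the equality by establishing both inequalities. For the upper bound $\gamma_{t2}(G\vee H)\le \gamma_{t2}(H)$, I take a minimum semitotal dominating set $D^{*}$ of $H$ and check that it remains a semitotal dominating set of $G\vee H$. Every vertex of $V(H)\setminus D^{*}$ is already dominated by $D^{*}$ in $H$, and every vertex of $V(G)$ is adjacent in the join to every vertex of $V(H)\supseteq D^{*}$, so all of $V(G\vee H)$ is dominated. The within-distance-two condition is inherited from $H$ because distances in $G\vee H$ cannot exceed distances in $H$. This step is essentially routine.

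For the reverse inequality $\gamma_{t2}(G\vee H)\ge \gamma_{t2}(H)$, I let $D$ be a minimum semitotal dominating set of $G\vee H$ and decompose it as $D=D_{G}\cup D_{H}$ with $D_{G}=D\cap V(G)$ and $D_{H}=D\cap V(H)$. The aim is to produce a semitotal dominating set $D'\subseteq V(H)$ of size at most $|D|$. Since $G$ is complete and joined to $V(H)$, any nonempty subset of $V(H)$ already dominates $V(G)$, so each vertex $u\in D_{G}$ is used only to dominate vertices of $V(H)$ or to satisfy the distance-two condition. I would replace each $u\in D_{G}$ by a single vertex $\sigma(u)\in V(H)$ chosen as a neighbour in $H$ of some vertex that was dominated in $G\vee H$ only through $u$, so that the resulting set $D'=\sigma(D_{G})\cup D_{H}$ still dominates $V(H)$ within $H$.

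The main obstacle is verifying the within-distance-two condition in $H$ for this substituted set. In $G\vee H$, any vertex of $D_{G}$ lies at distance one from every other vertex of $D$, so this condition was automatic in the join whenever $D_{G}\neq\emptyset$; it must now be re-established inside $H$. Here the hypothesis that $H$ is not complete is used essentially: I would run a short case analysis on $|D_{G}|$ and on the local structure of $D_{H}$ in $H$, arguing that $\sigma$ can be chosen so that each vertex of $D'$ has a companion at distance at most two through edges of $H$, with at most one additional local adjustment if a vertex of $D'$ would otherwise be left unpaired. This yields $\gamma_{t2}(H)\le |D'|\le |D|=\gamma_{t2}(G\vee H)$, which together with the upper bound completes the proof.
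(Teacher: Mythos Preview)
Your upper bound $\gamma_{t2}(G\vee H)\le \gamma_{t2}(H)$ is correct and matches the paper. The lower bound, however, has a genuine gap, and it is not where you locate it: the failure is in domination, not in the distance-two condition. A vertex $u\in D_G$ is adjacent in $G\vee H$ to \emph{every} vertex of $V(H)$, so it may single-handedly be dominating all of $V(H)\setminus N_H[D_H]$. Replacing such a $u$ by one vertex $\sigma(u)\in V(H)$, chosen as an $H$-neighbour of just one of those vertices, cannot restore domination of the others. Concretely, take $G=K_1=\{g\}$ and $H=P_6=v_1v_2\cdots v_6$: then $D=\{g,v_1\}$ is a semitotal dominating set of $G\vee H$ of size $2$, but no $2$-element subset of $V(P_6)$ dominates $P_6$, so no choice of $\sigma$ can turn $D$ into a semitotal dominating set of $H$ without enlarging it. Hence the inequality $\gamma_{t2}(H)\le |D'|\le |D|$ simply fails, and no ``short case analysis'' on the distance-two condition can repair it.

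The paper's route for the reverse inequality is entirely different. It does not try to transport $D_G$ into $V(H)$; instead it uses that, since $G$ is complete, every vertex of $G$ lies at distance exactly one from every other vertex of $G\vee H$, and from this concludes that vertices of $G$ cannot occur in a semitotal dominating set of $G\vee H$ at all. That forces $D\subseteq V(H)$ outright, and then $D$ is already a semitotal dominating set of $H$, giving $\gamma_{t2}(H)\le |D|$. Your substitution scheme never invokes this exclusion principle, and without it the lower bound cannot be obtained.
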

		
		\begin{proof}
		Since in $G\vee H$ all vertices of $G$ are adjacent to all vertices of $H$, so the semitotal dominating set of $H$ is a semitotal dominating set of $G\vee H$. On the other hand, in $G$ the distance between two vertices is equal one and so  we have $ \gamma_{t2}(G\vee H)=\gamma_{t2}(H)$.\qed
		\end{proof}

	\begin{theorem} 
		$\gamma_{t2}(P_n\Box P_m) =\lceil \frac{2n}{5}\rceil\times \lceil \frac{m}{3}\rceil+ \lfloor\frac{m}{3}\rfloor \times (n-\lceil \frac{2n}{5}\rceil).$
	\end{theorem}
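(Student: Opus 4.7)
The plan is to establish the formula by matching upper and lower bounds on $\gamma_{t2}(P_n \Box P_m)$. Throughout, I write $V(P_n \Box P_m) = \{(i,j) : 1 \le i \le n,\ 1 \le j \le m\}$, with $i$ indexing the $P_n$-direction (rows) and $j$ the $P_m$-direction (columns).

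For the upper bound I construct an explicit semitotal dominating set $S$ of the claimed cardinality. Let $T \subseteq [n]$ be a minimum semitotal dominating set of $P_n$, so $|T|=\lceil 2n/5\rceil$ by Theorem~\ref{Thm1}(i); the canonical choice $T=\{5k+2,5k+4\}\cap[n]$ has the useful feature that the complement $\bar T:=[n]\setminus T$ also satisfies the distance-$2$ pairing property. Partition $[m]$ into three classes $P$, $Q$, $X$ with $|P|=\lceil m/3\rceil$, $|Q|=\lfloor m/3\rfloor$, and $|X|=m-|P|-|Q|$, arranged in the period-three pattern $\ldots P\,X\,Q\,P\,X\,Q\ldots$ so that every $X$-column lies between a $P$-column and a $Q$-column. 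Define
\[
S \;=\; \{(i,j):i\in T,\ j\in P\}\ \cup\ \{(i,j):i\in\bar T,\ j\in Q\}.
\]
Then $|S|=|T|\,|P|+|\bar T|\,|Q|=\lceil 2n/5\rceil\lceil m/3\rceil+(n-\lceil 2n/5\rceil)\lfloor m/3\rfloor$, matching the formula. I would verify that $S$ is dominating by case analysis on the column class of $(i,j)$: in a $P$-column either $(i,j)\in S$ (when $i\in T$) or $(i,j)$ is row-adjacent to some $(i',j)\in S$ with $i'\in T$ at distance $\le 2$; symmetrically in $Q$-columns using $\bar T$; in $X$-columns, $(i,j)$ is row-adjacent both to a $P$-column vertex in $S$ (when $i\in T$) and to a $Q$-column vertex in $S$ (when $i\in\bar T$). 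The semitotal condition follows because each vertex of $S$ in a $P$-column has a $T$-partner in the same column within row distance $2$ (from the semitotal structure of $T$ in $P_n$), and similarly in $Q$-columns using the distance-$2$ pairing of $\bar T$.

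For the lower bound, I would partition the columns of the grid into consecutive triples and argue that each complete triple contributes at least $n$ vertices to any semitotal dominating set $D$, while the leftover boundary columns (when $m\not\equiv 0\pmod 3$) contribute an additional $\lceil 2n/5\rceil$. The triple bound is equivalent to the $m=3$ case of the formula, $\gamma_{t2}(P_n\Box P_3)\ge n$, which can be proved directly by analyzing how $D$ must dominate the $n$-vertex $P_n$-column paths inside a three-column strip while respecting the distance-$2$ pairing; the boundary term corresponds to the semitotal domination of a single residual column, i.e.\ $\gamma_{t2}(P_n)=\lceil 2n/5\rceil$ from Theorem~\ref{Thm1}(i).

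The main obstacle will be the lower bound. A naive per-vertex counting bound gives only $\lceil nm/5\rceil$, strictly weaker than the formula. To close the gap one must exploit the distance-$2$ pairing condition — every $d\in D$ has a partner $d'\in D$ with $\mathrm{dist}(d,d')\le 2$ — which prevents $D$ from spreading out as in ordinary domination and enforces the column-triple density. The most delicate step is showing that the triple bounds remain additive across column boundaries (since $D$-vertices may a priori be shared between adjacent triples, allowing savings), and verifying the exact boundary contribution for each residue class of $m\bmod 3$, which is where the $\lceil\,\cdot\,\rceil$ and $\lfloor\,\cdot\,\rfloor$ in the formula originate.
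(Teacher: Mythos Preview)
Your upper-bound construction is exactly the idea in the paper: arrange the $m$ copies of $P_n$ in a period-three pattern, placing a minimum semitotal dominating set $T$ of $P_n$ in one class of columns and its complement $\bar T$ in another, leaving the intermediate columns empty. The paper's argument, however, stops there: it describes this construction and then simply asserts that the resulting number is the minimum, with no lower-bound reasoning at all. So your proposal is strictly more careful than the paper's own proof --- you correctly identify that a matching lower bound is needed and that it is the genuinely hard part, whereas the paper does not address it.

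Two small technical points on your upper bound. First, your ``canonical'' choice $T=\{5k+2,5k+4\}\cap[n]$ is not always a semitotal dominating set of $P_n$: for $n=7$ it yields $\{2,4,7\}$, in which $7$ is at distance $3$ from its nearest partner $4$. You need to adjust the pattern near the right endpoint (e.g.\ take $\{2,4,6\}$ for $n=7$), and then re-verify that $\bar T$ retains the distance-$2$ pairing you rely on for the terminal $Q$-column. Second, in your domination check for $P$-columns you wrote ``at distance $\le 2$'' where domination requires adjacency; this is just a slip, since $T$ is a dominating (not merely semitotal) set of $P_n$.

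On the lower bound: your column-triple strategy is a natural first attempt, but as you yourself note, the additivity across triple boundaries is not automatic --- vertices of $D$ in one triple can dominate vertices of the neighbouring triple, so the inequality $\gamma_{t2}(P_n\Box P_3)\ge n$ (even if established) does not by itself force each triple to carry $n$ vertices of $D$. The paper offers no help here, so closing this gap would require a genuinely new argument.
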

	\begin{proof}
	By the construction of $P_n\Box P_m$ we have $m$ copies of $P_n$. The vertices of the second copy that adjacent to the semitotal dominating set of the first copy cover by these vertices and we can cover other 
	vertices of the second  copy by the complement of the semitotal dominating set of the third  copy. By continuing this method for  other copies, the number of at least vertices  that we can cover all vertices of $P_n\Box P_m$ by them, is  
$\lceil \frac{2n}{5}\rceil\times \lceil \frac{m}{3}\rceil+ \lfloor\frac{m}{3}\rfloor \times (n-\lceil \frac{2n}{5}\rceil).$\qed
	\end{proof}

		%\begin{theorem}
%	If $G$ is a disconnected graph with components $G_1,G_2,...,G_n$, then
	% $$\gamma_{t2}(G)=\gamma_{t2}(G_1)+ \gamma_{t2}(G_2)+...+\gamma_{t2}(G_n).$$
%	\end{theorem}

\section{The number of semitotal dominating sets} 

 In this section, we consider the problem of the number of the semitotal dominating sets of any size in a graph $G$. 
	Let ${\mathcal D}_{t2}(G,i)$ be the family of
	semitotal   dominating sets of a graph $G$ with cardinality $i$ and let
	$d_{t2}(G,i)=|{\mathcal D}_{t2}(G,i)|$. We denote the generating function for the number of semitotal dominating sets of $G$ by $D_{t2}(G,x)$ and is the polynomial
	$$D_{t2}(G,x)=\sum_{ i=1}^{|V(G)|} d_{t2}(G,i) x^{i},$$ and we call it semitotal domination polynomial of $G$.  
	Here, we try to count the number of this kind of dominating sets and study the semitotal domination polynomial for certain graphs.

	\begin{theorem} 
	\begin{enumerate} 
	\item[(i)] For every $i\neq n$, $d_{t2}(K_{1,n},i)=0$, $d_{t2}(K_{1,n},n)=1.$
	
	\item[(ii)] For every $n\geq 3$, $D_{t2}(K_{1,n},x)=x^n$.  
	\end{enumerate} 
	\end{theorem}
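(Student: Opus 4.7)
Both parts reduce to proving (i), since (ii) is just the generating-function restatement. My plan is to split the analysis according to the cardinality $i$ of the candidate set, in three ranges.

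For $i<n$ I would invoke Theorem 2.2(v). The central vertex on its own dominates $K_{1,n}$, so $\gamma(K_{1,n})=1$ and hence $\gamma_{t2}(K_{1,n})=1+(n-1)=n$; this forces $d_{t2}(K_{1,n},i)=0$ for every $i<n$. For $i=n$, write $c$ for the center and $L=\{v_1,\dots,v_n\}$ for the leaves. There are exactly $n+1$ subsets of size $n$: the set $L$ itself, and the $n$ subsets of the form $\{c\}\cup(L\setminus\{v_j\})$. I would first confirm that $L$ is semitotal dominating: $c$ is adjacent to every leaf so domination is immediate, and any two distinct leaves lie at distance $2$ through $c$, so each leaf has another within distance $2$ in the set. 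The heart of the argument is the uniqueness assertion, where one must rule out each of the $n$ subsets containing $c$ by exhibiting a vertex of the set without an in-set partner meeting the semitotal witness condition. For $i>n$, the only candidate is $V(K_{1,n})$ itself, and the same style of argument excludes it.

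Assembling the three cases gives (i), and then $D_{t2}(K_{1,n},x)=\sum_i d_{t2}(K_{1,n},i)x^i=x^n$ delivers (ii). The step I expect to be the real obstacle is the uniqueness portion at $i=n$: while the lower bound on the size comes for free from Theorem 2.2(v) and the set $L$ is easily verified to work, ruling out every configuration that puts $c$ into the set depends on the precise reading of the distance-$2$ witnessing condition, and that is the place where the proof has to be most careful. Once that dichotomy is settled, the $i=n+1$ case and the passage to the polynomial form are completely routine.
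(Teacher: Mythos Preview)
Your outline matches the paper's argument in structure: the paper also dispatches $i<n$ via $\gamma_{t2}(K_{1,n})=n$ and then argues that the center $c$ cannot lie in \emph{any} semitotal dominating set, which simultaneously settles the uniqueness at $i=n$ and the case $i=n+1$. Your instinct that the crux is the exclusion of $c$ is exactly right.

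The point you flag as the ``real obstacle'' is resolved in the paper in one line: since $d(c,v)=1$ for every other vertex $v$, the center admits no witness at distance exactly~$2$, so any set containing $c$ fails the semitotal condition. This works only because throughout the paper the phrase ``within distance $2$ from another vertex of $D$'' is being read as \emph{at distance exactly~$2$}; that reading is what underlies Theorem~2.1(v) and Theorem~2.2(v) in the first place (otherwise $\{c,v_1\}$ would already be a semitotal dominating set and $\gamma_{t2}(K_{1,n})$ would equal $2$, not $n$). Under the standard ``distance at most $2$'' reading your hesitation is well founded: the sets $\{c\}\cup(L\setminus\{v_j\})$ and $V(K_{1,n})$ \emph{are} semitotal dominating sets, and the statement as written would be false. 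So your caution is justified; once you adopt the paper's convention, the missing step is immediate and your plan goes through exactly as the paper's proof does.
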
 
	\begin{proof}
		\begin{enumerate} 
			\item[(i)]
	Since $\gamma_{t2}(K_{1,n})=n$ so for $i< n, d_{t2}(K_{1,n},i)=0$ and since  the distance between central vertex with other vertices is equal one, so this vertex is not in the semitotal dominating set of $K_{1,n}$, and so $d_{t2}(K_{1,n},n)=1.$
	\item[(ii)]
	Follows from  Part (i) and the definition of the semitotal domination polynomial. \qed	
\end{enumerate} 
	\end{proof}

	\begin{theorem} For a bipartite graph $K_{m,n}$ with $3 \geq m$ and $m \leq n$, we have 
	\begin{equation*}
	d_{t2}(K_{m,n},i)=\left\{
	\begin{array}{ll}
	0  & i \leq m-1\\
	{m+n\choose m}-{n\choose m}-m{n\choose m-1}  & i=m\\
	{m+n\choose i}-{n\choose i}-m{n\choose i-1}-n{m\choose i-1} &i>m ,i\neq n\\
	{m+n\choose n}-mn-n{m\choose n-1}&  i=n
	\end{array}\right. 
	\end{equation*}
	\end{theorem}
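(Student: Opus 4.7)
The strategy is to fix $i$ and count the $i$-subsets of $V(K_{m,n})$ that are semitotal dominating, by sorting them according to their intersection with the bipartition. Write $A$ and $B$ for the two sides of $K_{m,n}$ with $|A|=m$ and $|B|=n$, and for a candidate set $D$ set $a=|D\cap A|$ and $b=|D\cap B|$, so that $a+b=i$.

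First I would record the two basic distance facts in $K_{m,n}$: any two vertices of the same part are at distance $2$ (via any vertex of the other part), while any two vertices of opposite parts are at distance $1$. From these I would extract the domination criterion, namely that $D$ dominates iff $D=A$, $D=B$, or both $a\geq 1$ and $b\geq 1$. I would then identify which profiles $(a,b)$ fail the semitotal condition: a lone vertex in $D\cap A$ (i.e. $a=1$) has no $A$-partner within distance $2$ in $D$, and symmetrically for $b=1$.

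The formula is then obtained by starting from the total count $\binom{m+n}{i}$ of all $i$-subsets and subtracting the bad configurations, with the case split driven by which subtractions actually contribute. The case $i\leq m-1$ is immediate from the lower bound $\gamma_{t2}(K_{m,n})\geq m$ supplied by Theorem~\ref{Thm1}(v). For $i=m$, I subtract the $\binom{n}{m}$ subsets contained in $B$ (these fail to dominate since $m<n$) and the $m\binom{n}{m-1}$ subsets with $a=1$ (these fail the semitotal condition on their single $A$-vertex). For $i>m$ with $i\neq n$, I additionally subtract the $n\binom{m}{i-1}$ subsets with $b=1$, since the profile $(a,b)=(i-1,1)$ is then distinct from the $a=1$ profile. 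Finally, when $i=n$, the subset $D=B$ is itself semitotal dominating, so the ``entirely in $B$'' subtraction is dropped and only the $m\binom{n}{n-1}=mn$ subsets with $a=1$ and the $n\binom{m}{n-1}$ subsets with $b=1$ are removed.

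The main technical step is verifying that the three families of bad subsets (``entirely in $B$'', ``exactly one $A$-vertex'', ``exactly one $B$-vertex'') are pairwise disjoint for every relevant $i$, so that no inclusion-exclusion correction terms are required. This is precisely where the hypothesis $m\leq 3$ is used: it rules out intermediate configurations with $2\leq a\leq m-2$, which for $m\geq 4$ would need separate semitotal analysis, and it ensures the remaining subtracted families do not overlap, so the displayed expression reduces to a clean alternating sum of binomial coefficients.
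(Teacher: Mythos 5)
Your complementary-counting scheme over the profiles $(a,b)=(|D\cap A|,|D\cap B|)$ is exactly the paper's approach, but the step on which everything hinges --- declaring the profiles $a=1$ and $b=1$ ``bad'' --- does not follow from the definition you are working with. The semitotal condition asks that every vertex of $D$ be within distance $2$ of \emph{some other vertex of $D$}, not of another vertex of $D$ on its own side. Since $K_{m,n}$ has diameter $2$, any two vertices are within distance $2$ of each other, so every $i$-set with $i\ge 2$ satisfies the semitotal condition automatically, and the count reduces to counting dominating sets: the only non-dominating $i$-sets are those contained in one side and different from that side. Your requirement of an ``$A$-partner'' for a lone $A$-vertex is invented to make the subtractions $m\binom{n}{i-1}$ and $n\binom{m}{i-1}$ look forced, but it is not part of the definition. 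Concretely, for $K_{2,2}=C_4$ and $i=2$ the displayed formula gives $\binom{4}{2}-\binom{2}{2}-2\binom{2}{1}=1$, whereas all six $2$-subsets of $C_4$ are semitotal dominating sets. (For the same reason the base case $d_{t2}=0$ for $i\le m-1$ cannot be ``immediate'' from Theorem~\ref{Thm1}(v): that theorem's value $\gamma_{t2}(K_{m,n})=\min\{m,n\}$ already violates the sandwich $\gamma_{t2}\le\gamma_t=2$ stated in the introduction.)

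Even if you adopt the reading under which a vertex of $D$ needs a partner at distance exactly $2$ (equivalently, a same-side partner) --- the only reading under which the subtracted families are genuinely bad, and evidently what the paper's one-line proof has in mind --- your case analysis is still not self-consistent. In the branch $i=m$ the profile $b=1$, i.e.\ $(a,b)=(m-1,1)$, is just as bad as $a=1$, yet neither you nor the formula subtracts $n\binom{m}{m-1}$ there. For $K_{3,4}$ and $i=3$ a direct check under that reading leaves exactly one semitotal dominating set (namely $A$ itself), while the $i=m$ branch yields $35-4-18=13$. So the gap is not the disjointness verification you flag as the ``main technical step'' (which is fine for $i\ge 3$); it is that the list of excluded configurations is wrong under either reading of the definition, and no appeal to the hypotheses $m\le 3$, $m\le n$ repairs it. A correct treatment has to start by pinning down, from the definition, exactly which profiles $(a,b)$ are semitotal dominating, and then sum $\binom{m}{a}\binom{n}{b}$ over those profiles rather than patch the total $\binom{m+n}{i}$.
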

	\begin{proof}
	If $m\leq 3, \gamma_{t2}(K_{m,n})=m$ and so $d_{t2}(K_{m,n},i)=0$ for $i\leq m-1$. If $i\geq m$,  the number of sets with  $i$ vertices is  ${m+n\choose i}$, but since   the distance between one vertex of the first section and one vertex of the second section is one, so some of these sets  are not semitotal dominating set. The number of  $i$-sets which cannot be semitotal dominating sets, are  ${m\choose 1}\times {n\choose i-1}$, ${n\choose 1}\times {m \choose i-1}$ and  also for $i\neq n$,  ${n \choose i}$. So we have the result.\qed
\end{proof}

	Similarly, we have the following theorem: 
		
	\begin{theorem} For a bipartite graph $K_{m,n}$ with $4\leq m\leq n$, we have 
	\begin{equation*}
	d_{t2}(K_{m,n},i)=\left\{
	\begin{array}{ll}
	0  & i \leq 3\\
	{m+n\choose m}-{n\choose m}-m{n\choose m-1}  & i=m\\
	{m+n\choose i}-{n\choose i}-m{n\choose i-1}-n{m\choose i-1} &i>3 ,i\neq n\\
	{m+n\choose n}-mn-n{m\choose n-1}&  i=n
	\end{array}\right. 
	\end{equation*}
	\end{theorem}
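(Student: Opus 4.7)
The approach I would take is a direct adaptation of the inclusion--exclusion argument from the proof of Theorem~3.3; the only structural change comes from the fact that $\gamma_{t2}(K_{m,n})=4$ when $4\le m\le n$ (Theorem~1.1(v)) rather than $m$. That observation immediately handles the first row of the piecewise formula: no semitotal dominating set can have fewer than $4$ vertices, hence $d_{t2}(K_{m,n},i)=0$ for $i\le 3$.

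For $i\ge 4$, write $V(K_{m,n})=A\cup B$ with $|A|=m$ and $|B|=n$, and count by complement from the total $\binom{m+n}{i}$. I would isolate three families of bad $i$-subsets to subtract. Family~(a) consists of the $i$-subsets lying wholly inside $B$, which cannot dominate $A$; these contribute $\binom{n}{i}$ unless $i=n$, in which case the only candidate $D=B$ is itself semitotal dominating and must \emph{not} be subtracted. Family~(b) consists of subsets containing exactly one vertex of $A$ and $i-1$ vertices of $B$, in which the lone $A$-vertex has no other vertex of $D$ at distance~$2$ inside $K_{m,n}$, so the semitotal condition fails; this family has size $m\binom{n}{i-1}$. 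Family~(c) is the symmetric family with exactly one vertex of $B$, of size $n\binom{m}{i-1}$. No separate enumeration is needed for subsets lying wholly inside $A$: we have $\binom{m}{i}=0$ for $i>m$, and when $i=m$ the unique candidate $D=A$ is itself semitotal dominating, since every vertex of $A$ has another vertex of $A$ at distance~$2$ (using $m\ge 2$).

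The routine-but-essential step is the disjointness check: the three bad families are pairwise disjoint for $i\ge 4$, because any common element forces $|D|\le 3$ (for instance, a set in (b)$\cap$(c) has exactly one vertex in each part, hence $|D|=2$). This lets me add the three counts without any inclusion--exclusion corrections and obtain the generic formula for $i>3$, $i\ne n$. The $i=n$ row then follows by replacing the contribution from family~(a) by $0$ and expanding $m\binom{n}{n-1}=mn$; the $i=m$ row follows from the parallel adjustment on the $A$-side. The step I expect to require the most care is exactly this boundary bookkeeping at $i=m$ and $i=n$, where the entire-part sets $D=A$ and $D=B$ must be correctly flagged as valid semitotal dominating sets and not swept into the subtracted families. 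Apart from that check, the remainder is straightforward binomial arithmetic parallel to Theorem~3.3.
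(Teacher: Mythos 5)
Your decomposition into ``bad'' families follows the right template (it mirrors the paper's proof of the preceding theorem for $m\le 3$; the paper gives no separate proof here, stating the result only as ``similar''), but the step where you dismiss the subsets lying wholly inside $A$ is a genuine gap. You consider only $i>m$ (where $\binom{m}{i}=0$) and $i=m$ (where $D=A$ is indeed a valid semitotal dominating set), but for $m\ge 5$ the range $4\le i<m$ is nonempty, and there the $\binom{m}{i}$ subsets $D\subseteq A$ with $|D|=i<m$ fail to dominate: $A$ is an independent set, so every vertex of $A\setminus D$ has no neighbour in $D$. These sets must be subtracted and are not. This is not only a hole in your argument but an error in the displayed formula itself: for $K_{5,5}$ and $i=4$, under the convention the paper is using for $K_{m,n}$ (each vertex of $D$ needs another $D$-vertex in its own part), the semitotal dominating $4$-sets are exactly those with two vertices in each part, giving $\binom{5}{2}^2=100$, whereas the stated expression evaluates to $\binom{10}{4}-\binom{5}{4}-5\binom{5}{3}-5\binom{5}{3}=105$. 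A smaller slip in the same spirit: sets wholly inside $B$ fail to dominate $B$, not $A$ --- every vertex of $A$ is adjacent to all of $B$.

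The boundary bookkeeping at $i=m$ is also not delivered by your ``parallel adjustment.'' At $i=n$ you correctly refrain from subtracting $\binom{n}{i}$ because the unique all-$B$ candidate is a valid semitotal dominating set. The parallel statement at $i=m$ concerns the all-$A$ family --- which your generic count never subtracts in the first place --- and gives no licence to drop the term $n\binom{m}{m-1}=nm$: your family (c) at $i=m$ consists of sets with one $B$-vertex and $m-1\ge 3$ $A$-vertices, and these remain non-semitotal under the convention in force, so they must still be removed. Consequently the $i=m$ row and the $i>3,\ i\ne n$ row of the statement contradict each other whenever $4\le m<n$ (they differ by exactly $nm$), and no single counting argument can derive both. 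Your disjointness check and the treatment of the $i\le 3$ and $i=n$ cases are fine, but the statement needs correction (an extra $-\binom{m}{i}$ term for $4\le i<m$, plus a consistent $i=m$ row) before the proof can be completed.
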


	%\begin{corollary}
	%If $4<m<n$ then $d_{t2}(K_{m,n},4)=C(m,2)\cdot C(n,2)$ and\\ $d_{t2}(K_{m,n},5)=C(m,2)\cdot C(n,3)+C(m,3)\cdot C(n,2)$.
	%\end{corollary}
	\begin{theorem} 
	\begin{enumerate} 
	\item[(i)] For every $i\geq n\geq 2$, $	d_{t2}(F_n,i)=	2^{n} {n\choose i-n}.$
	
	\item[(ii)] For every $n\geq 2$, $D_{t2}(F_n,x)=2^nx^n(1+x)^n$.  
	\end{enumerate} 
	\end{theorem}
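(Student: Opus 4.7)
The plan is to prove (i) via a structural characterization of the semitotal dominating sets of $F_n$, from which (ii) follows immediately by reading off a generating function. Label the central vertex of $F_n$ by $v$ and the two non-central vertices of the $j$-th triangle by $u_j$ and $w_j$ for $j = 1, \dots, n$; I will call $\{u_j, w_j\}$ the $j$-th \emph{blade}. Because any two vertices in distinct blades are joined by a path of length two through $v$, the diameter of $F_n$ equals $2$, and so the ``within distance $2$'' condition on a semitotal dominating set $D$ collapses to the requirement $|D| \geq 2$, which is automatic once $n \geq 2$.

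I would begin with the structural claim: for $i \geq n$, the semitotal dominating sets of $F_n$ of cardinality $i$ are precisely those obtained by choosing a distinguished ``primary'' vertex $a_j \in \{u_j, w_j\}$ from each blade and then adjoining the remaining companion vertex $b_j \in \{u_j, w_j\} \setminus \{a_j\}$ for exactly $i - n$ of the blades. The forward direction appeals to Theorem 2.1 (iii), giving $\gamma_{t2}(F_n) = n$ and thereby forcing $D$ to meet every blade; the reverse direction is a direct check that any such set dominates every blade vertex (it is either in $D$ or adjacent within its blade to a vertex of $D$) and also dominates $v$ (through any blade vertex in $D$).

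With this parameterization in hand, the enumeration factorizes blade-by-blade: each of the $n$ blades contributes an independent factor $2x$ for the choice of primary vertex and an optional factor $(1+x)$ for including the companion, so
\[
D_{t2}(F_n, x) \;=\; \prod_{j=1}^{n} 2x(1+x) \;=\; 2^n x^n (1+x)^n,
\]
and extracting the coefficient of $x^i$ yields $d_{t2}(F_n, i) = 2^n \binom{n}{i-n}$ for every $i \geq n$; part (ii) is then immediate.

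The main obstacle I expect is the structural claim itself, and in particular verifying that the ``primary plus optional companion'' description corresponds in a clean way to the family of semitotal dominating sets of the prescribed size, and that configurations involving the central vertex $v$ are already absorbed into this parameterization. Once this correspondence is established, the rest is a routine binomial extraction from the product formula.
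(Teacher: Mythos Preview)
Your two-step procedure---pick a primary vertex in each blade, then optionally adjoin companions---is exactly the argument the paper gives, and the obstacle you flag at the end is real and is not overcome by either your outline or the paper's terse proof. The map from parameterizations to sets is \emph{not} injective: if both $u_j$ and $w_j$ lie in $D$, that blade arises from two distinct parameterizations (either vertex may serve as primary with the other as companion), so the product $\prod_{j=1}^{n}2x(1+x)$ is counting labelled choices rather than sets. The honest blade-by-blade generating function for subsets of the blade vertices that meet every blade is $\prod_{j=1}^{n}(2x+x^{2})=x^{n}(2+x)^{n}$, not $2^{n}x^{n}(1+x)^{n}$; already in $F_2$ there are only four such sets of size $3$, whereas your count gives $2^{2}\binom{2}{1}=8$.

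There is a second, related gap. Your own observation that $F_n$ has diameter $2$ (so the distance condition is vacuous for any $D$ with $|D|\ge 2$) undermines the structural claim rather than supporting it: under that reading, any set containing the centre $v$ together with one further vertex is already a semitotal dominating set, so $\gamma_{t2}(F_n)=2$, not $n$, and the sets containing $v$ are certainly not ``absorbed'' into a blade-only parameterization. The paper sidesteps this by implicitly reading ``within distance $2$'' as ``at distance exactly $2$'' (compare its proof for $K_{1,n}$, which excludes the centre for precisely this reason), and you would have to adopt that same non-standard convention to force $v\notin D$ and make the structural claim even plausible. Even then, the overcounting in the previous paragraph remains.
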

	\begin{proof}
	\begin{enumerate}
	\item[(i)] Since $\gamma_{t2}(F_n)=n$ so if $i<n$ , $d_{t2}(F_n,i)=0$.
	If $i\geq n$, then first we should select  $n$ vertex from sides of $n$ triangles by $2^{n}$ methods  
	and then  select $i-n$ vertex with ${n\choose i-n}$ ways. Therefore $d_{t2}(F_n,i)=	2^{n} {n\choose i-n}$.
	\item[(ii)]  
	It follows by Part (i) and definition of semitotal domination polynomial.\qed
	\end{enumerate}
\end{proof}

	We need the following lemma to obtain more results:
	
	\begin{lemma}{\rm \cite{Goddard}}
	 If $G$ is a connected graph on $n\geq 4$ vertices, then $\gamma_{t_2}(G)\leq \frac{n}{2}$. 
	 	\end{lemma}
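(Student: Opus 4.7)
The plan has two stages: first reduce the inequality to trees via a spanning-subgraph argument, then prove the bound for trees by induction on $n$.

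For the reduction, I would observe that $\gamma_{t2}$ is monotone under edge addition. If $H$ is a spanning subgraph of $G$ and $D \subseteq V$ is a semitotal dominating set of $H$, then $D$ is also a dominating set of $G$ (extra edges preserve domination), and for each $u \in D$ the required partner $v \in D$ satisfies $d_G(u,v) \leq d_H(u,v) \leq 2$. Applying this with $H$ a spanning tree $T$ of $G$ yields $\gamma_{t2}(G) \leq \gamma_{t2}(T)$, so it suffices to prove the bound for trees on $n \geq 4$ vertices.

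For trees I would proceed by strong induction on $n$. The base case $n = 4$ reduces to $P_4$ and $K_{1,3}$, and in each a two-vertex semitotal dominating set is immediate (the two internal vertices of $P_4$; the center together with any leaf of $K_{1,3}$). For the inductive step with $n \geq 5$ I would split on the diameter of $T$. When $\mathrm{diam}(T) \leq 3$, $T$ is a star or a double star and two adjacent non-leaf vertices (or the centre together with one leaf, in the star case) give a set of size $2 \leq n/2$. When $\mathrm{diam}(T) \geq 4$, I would fix a diametrical path $v_0 v_1 \cdots v_d$, let $L$ be the set of leaves adjacent to the support vertex $v_1$, delete $L \cup \{v_1\}$ to obtain a forest $T'$ on $n - |L| - 1$ vertices, apply the induction hypothesis to the nontrivial components of $T'$, and adjoin $v_1$ to the resulting set (which now dominates $L$ and pairs at distance $1$ with $v_2 \in V(T')$).

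The main obstacle will be the bookkeeping in the tight subcase $|L| = 1$, where only the two vertices $v_0$ and $v_1$ are removed and the $n/2$ budget has essentially no slack. In this subcase one has to ensure that the semitotal dominating set inherited from $T'$ either already contains a vertex within distance $2$ of $v_2$, or else that a local modification near $v_2$ can be performed without increasing the cardinality. The standard device for closing this gap is to strengthen the induction hypothesis — for instance, insisting that all support vertices of $T$ lie in $D$, or maintaining the invariant that the vertex adjoined at each step can always be matched with an existing neighbour in $D'$. Threading this strengthened hypothesis consistently through the case analysis on the structure of $T$ near the end of a diametrical path is where essentially all of the effort in the Goddard–Henning–McPillan argument is concentrated.
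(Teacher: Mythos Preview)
The paper does not prove this lemma at all: it is quoted verbatim from \cite{Goddard} and used as a black box, so there is no in-paper argument to compare against. What follows is therefore a comment on the internal soundness of your sketch.

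Your reduction to trees via a spanning-tree/monotonicity argument is correct and standard. In the tree induction, however, the ``main obstacle'' you isolate is not an obstacle. You worry that after deleting $L\cup\{v_1\}$ and taking a semitotal dominating set $D'$ of $T'$, the adjoined vertex $v_1$ may lack a partner in $D'$. But $D'$ is in particular a dominating set of $T'$, and $v_2\in V(T')$; hence some vertex of $D'$ lies in $N_{T'}[v_2]$, and any such vertex is at $T$-distance at most $2$ from $v_1$. Thus $D'\cup\{v_1\}$ is automatically semitotal in $T$, with
\[
|D'\cup\{v_1\}| \le \frac{n-|L|-1}{2}+1 = \frac{n-|L|+1}{2} \le \frac{n}{2}
\]
since $|L|\ge 1$. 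No strengthened induction hypothesis (support vertices forced into $D$, etc.) is needed for this step; the $|L|=1$ case is tight but not problematic.

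The genuine loose end in your outline is elsewhere: you must check that $T'$ is connected (it is, because every neighbour of $v_1$ other than $v_2$ is a leaf by the diametrical choice, so deleting $\{v_1\}\cup L$ leaves a single tree rooted at $v_2$), and you must handle the finitely many situations where $|V(T')|<4$ and the induction hypothesis does not literally apply. Since $\mathrm{diam}(T)\ge 4$ forces $v_2,v_3,v_4\in V(T')$, the only such case is $|V(T')|=3$, i.e.\ $T'=P_3$, which is disposed of by hand.
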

	
	Goddard, Henning, and  McPillan in \cite{Goddard} characterized the trees with semitotal domination number exactly one-half order. They defined a family $\mathcal{T}$ of trees as follows. Let $H$ be a nontrivial tree and for each vertex $v$ of $H$, add either a $P_2$ or a $P_4$ and identify $v$ with one end vertex of the path. They proved the following theorem: 
	
	\begin{theorem} \label{half}
		Let $T$ be a tree of order $n\geq 4$. Then $\gamma_{t_2}(T) =\frac{n}{2}$ if and only if $T\in \mathcal{T} $ or $T=K_{1,3}$.
	\end{theorem}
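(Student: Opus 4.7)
The plan is to prove the two directions of the biconditional separately, working throughout with the natural partition of $T$ into the ``pieces'' suggested by the construction of $\mathcal{T}$.

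\textbf{Sufficiency.} For $T=K_{1,3}$ a direct check suffices: the center together with any one leaf is a semitotal dominating set of size $2=n/2$, while no $1$-vertex set can be semitotal dominating. For $T\in\mathcal{T}$, write $T$ as a nontrivial base tree $H$ on $k$ vertices together with, at each $v\in V(H)$, an attached $P_2$ or $P_4$; let $a$ of these attachments be $P_2$'s and $b=k-a$ be $P_4$'s, so that $n=2a+4b$ and $n/2=a+2b$. Take $D$ to be $V(H)$ together with, in each $P_4$-attachment $v\!-\!x_1\!-\!x_2\!-\!x_3$, the middle vertex $x_2$; a routine verification (pendants dominated by their support, each $x_3$ dominated by $x_2$, each vertex of $D$ having a partner within distance $2$ since $H$ is nontrivial) gives $\gamma_{t_2}(T)\le n/2$. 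For the matching lower bound, observe that the pieces (a vertex of $H$ together with its attachment) partition $V(T)$, and argue piece-by-piece that every semitotal dominating set $D$ meets every $P_2$-piece in at least one vertex (to dominate the pendant) and every $P_4$-piece in at least two vertices: in the latter, $x_3$ must be dominated, so either $x_2\in D$ or $x_3\in D$; in either subcase the distance-$2$ partner requirement for that vertex, together with $x_3$ being a leaf, forces a second vertex of the piece into $D$. Summation gives $|D|\ge a+2b=n/2$.

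\textbf{Necessity.} Conversely, suppose $T$ is a tree of (necessarily even) order $n\ge 4$ with $\gamma_{t_2}(T)=n/2$ and $T\ne K_{1,3}$; the goal is $T\in\mathcal{T}$. Proceed by induction on $n$, with the cases $n=4,6$ handled by a short enumeration. For the inductive step fix a longest path $u_0u_1\cdots u_d$ and analyse the subtree hanging off $u_1$ (if $d\le 2$) or off $u_3$ (if $d\ge 3$). Show that the extremal assumption forces this hanging subtree to be a $P_2$-pendant or a $P_4$-branch: any other local configuration near the end of a longest path (a support vertex with two or more leaves, a $P_3$-branch, a ``broom'' with a short handle, etc.) admits a semitotal dominating set of size strictly less than $n/2$, contradicting the hypothesis. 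Peel off the identified $P_2$- or $P_4$-branch to form a smaller tree $T'$, verify $\gamma_{t_2}(T')=|V(T')|/2$, and apply induction to obtain $T'\in\mathcal{T}$; re-attaching the branch places $T$ in $\mathcal{T}$.

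\textbf{Main obstacle.} Sufficiency is essentially careful book-keeping once the pieces have been identified. The substantive work lies in the necessity direction: enumerating the local configurations near the end of a longest path and, for each non-$\mathcal{T}$ configuration, exhibiting an explicit semitotal dominating set of size strictly less than $n/2$. This surgery is delicate because swapping a vertex into or out of a candidate set $D$ can destroy the distance-$2$ partner condition for some distant vertex of $D$; the cleanest remedy, which is what I would adopt, is to remove whole branches at a time and keep careful track of how the remaining pieces continue to supply partners, so that the inductive step reduces cleanly to a strictly smaller tree in $\mathcal{T}$.
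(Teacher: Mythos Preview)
The paper does not prove this theorem at all: it is quoted from Goddard, Henning and McPillan \cite{Goddard} and used as a black box to derive the subsequent corollary about $D_{t_2}(T,x)=D(T,x)$. So there is no ``paper's own proof'' to compare your proposal against; if your goal is to match the paper, the correct move is simply to cite \cite{Goddard} and state the result.

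That said, as a standalone argument your sketch is reasonable and is in the spirit of how such tree characterizations are usually established. The sufficiency half is essentially complete and correct (your piecewise lower bound is clean). The necessity half is only an outline: the real work, as you note, is the case analysis near the end of a longest path, and you have not actually carried it out. In particular, you would still need to verify that after peeling off a $P_2$- or $P_4$-branch the smaller tree $T'$ again satisfies $\gamma_{t_2}(T')=|V(T')|/2$ (both inequalities), and to handle the degenerate situation where $T'$ drops below order $4$ or becomes $K_{1,3}$. None of this is done in the present paper, so if a full proof is desired you would be reconstructing the argument of \cite{Goddard} rather than anything appearing here.
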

	
	Now, we state and prove the following result: 
	
	\begin{theorem}
	For every tree $T\in \mathcal{T}$, $D_{t_2}(T,x)=D(T,x)$. 
	\end{theorem}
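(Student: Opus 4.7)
The plan is to prove the set-level identity $\mathcal{D}_{t_2}(T,i)=\mathcal{D}(T,i)$ for every $i$, which immediately yields $D_{t_2}(T,x)=D(T,x)$. Because every semitotal dominating set is by definition a dominating set, the inclusion $\mathcal{D}_{t_2}(T,i)\subseteq\mathcal{D}(T,i)$ is automatic; the whole content lies in the reverse inclusion, namely that every dominating set of $T\in\mathcal{T}$ already satisfies the distance-two condition.

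To establish the reverse inclusion I would fix a dominating set $D\subseteq V(T)$ and an arbitrary $x\in D$, and produce a $y\in D\setminus\{x\}$ with $d_T(x,y)\le 2$. Recall that $T$ is built from a nontrivial tree $H$ by attaching, at each vertex $v\in V(H)$, a copy of $P_2$ or $P_4$, with $v$ identified as an end-vertex; write $L_v$ for the attached path and $\ell_v$ for its far leaf. The first step exploits the pendant leaves: the leaf $\ell_v$ must be dominated, which forces $D$ to meet a specific vertex of $L_v$, and when $L_v$ is a $P_4$ the need to dominate both internal vertices of the path forces $D$ to contain two vertices of $L_v$ at distance at most $2$ from one another. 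This step furnishes the required partner $y$ whenever $x$ sits inside a $P_4$-attachment, and it also identifies $y$ for the case $x=v\in V(H)$ (take $y$ to be the forced vertex of $D$ inside $L_v$).

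The remaining case is $x$ in a $P_2$-attachment $L_v=v\ell_v$ with $D\cap L_v=\{x\}$. Here the partner $y$ must live outside $L_v$, yet the distance constraint $d_T(x,y)\le 2$ leaves only a narrow window: either $v$ itself or a neighbour of $v$ in $H$. This is the main obstacle, and the place where the hypothesis $T\in\mathcal{T}$ really has to earn its keep. I would appeal to Theorem~\ref{half}, which characterises $\mathcal{T}$ as precisely the trees attaining $\gamma_{t_2}(T)=n/2$; the extremal identity is rigid enough to force the required vertex into every dominating set $D$, ruling out the pathological configuration in which $x$ is left without a close partner. Assembling the three cases yields $\mathcal{D}(T,i)\subseteq\mathcal{D}_{t_2}(T,i)$, completing the proof.
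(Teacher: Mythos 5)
Your reduction to the set-level claim and the case analysis by attachment type is the right way to organize an honest proof, and your first two cases (the $P_4$-attachments, and $x=v\in V(H)$) do go through. But the third case, which you correctly identify as the crux, cannot be closed, and the appeal to Theorem~\ref{half} cannot save it: that theorem only computes the \emph{value} $\gamma_{t_2}(T)=n/2$ and carries no information about which dominating sets happen to be semitotal. The implication you need --- that every dominating set of $T\in\mathcal{T}$ of size at least $\gamma_{t_2}(T)$ is semitotal --- is in fact false. Take $H=K_2$ with a $P_2$ attached at each vertex, so that $T=P_4\in\mathcal{T}$; the set consisting of the two leaves of $P_4$ is a dominating set of size $2=\gamma_{t_2}(P_4)$, but its two vertices are at distance $3$, so it is not semitotal. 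Hence $d(P_4,2)=4$ while $d_{t_2}(P_4,2)=3$, and $D(P_4,x)\neq D_{t_2}(P_4,x)$. This is exactly the ``pathological configuration'' of your third case: $x=\ell_u$ lies in a $P_2$-attachment, $D\cap L_u=\{\ell_u\}$, and nothing forces $u$ or a neighbour of $u$ into $D$.

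The situation is worse than a single missing case. For $T=P_6\in\mathcal{T}$ (attach a $P_4$ at one vertex of $K_2$ and a $P_2$ at the other) one has $\gamma(P_6)=2<3=\gamma_{t_2}(P_6)$, so the two polynomials already differ in their lowest-degree terms --- a discrepancy your argument never addresses, since you only treat the inclusion $\mathcal{D}(T,i)\subseteq\mathcal{D}_{t_2}(T,i)$ for sizes where both families are nonempty and never verify $d(T,i)=0$ for $i<\gamma_{t_2}(T)$. For what it is worth, the paper's own proof consists of precisely the unjustified assertion you were hoping to extract from Theorem~\ref{half} (namely that every dominating set of $T$ with cardinality $i\geq\gamma_{t_2}(T)$ is a semitotal dominating set), so you have reproduced the paper's gap rather than introduced a new one; but the gap is genuine, and the statement as it stands is false.
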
 
	\begin{proof}
	We should prove that for every $i\geq \gamma_{t_2}(T)$, $d_{t_2}(T,i)=d(T,i)$. Suppose that $i\geq \gamma_{t_2}(T)$, by Lemma \ref{half}, every dominating set of $T$ with cardinality $i\geq \gamma_{t_2}(T)$ is a semitotal dominating set of $T$. Therefore, we have the result. \qed 	
		\end{proof}

			Goddard, Henning, and  McPillan in \cite{Goddard} extended Theorem \ref{half} from trees to  all graphs. For given graphs $G$ and $H$ and  every vertex in $G$, form a copy of $H$ and identify one vertex in the copy of $H$ with the corresponding vertex in $G$. Let to denote this as $G\diamond H$ (See $P_5\diamond C_4$ in Figure \ref{diamond}). The following theorem  characterize the graphs with with minimum degree at least $2$ whose  semitotal domination number is exactly one-half order.
			
				\begin{theorem} \label{halfgraph}{\rm \cite{Goddard}}
					Let $G$ be a connected graph  of order $n\geq 4$ with minimum degree at least $2$. Then $\gamma_{t_2}(T) =\frac{n}{2}$ if and only if $G$ is $C_6, C_8$, a spanning subgraph of $K_4$ or $H\diamond C_4$ for some graph $H$. 
				\end{theorem}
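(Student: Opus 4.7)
The plan is to prove the two directions separately, with the reverse (``only if'') direction carrying the bulk of the work. The general upper bound $\gamma_{t2}(G)\le n/2$ from the preceding lemma is the inequality whose tightness we are characterising.

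For the forward direction I would verify each listed family directly. For $C_6$ and $C_8$ the closed form $\gamma_{t2}(C_n)=\lceil 2n/5\rceil$ from Theorem \ref{Thm1}(i) immediately gives $3=6/2$ and $4=8/2$. For a spanning subgraph $G$ of $K_4$ with $\delta(G)\ge 2$, a single vertex cannot semitotally dominate $G$ while any two adjacent vertices do, so $\gamma_{t2}(G)=2=n/2$. For $G=H\diamond C_4$, labelling the vertices of each attached $C_4$ cyclically as $v,a,c,b$ with $v$ the root, the union over all copies of the pairs $\{a,b\}$ is a semitotal dominating set (every private $C_4$-vertex and every root is dominated, and $a,b$ lie at distance $2$ from each other), giving $\gamma_{t2}\le n/2$; conversely, within each copy at least two vertices must be contributed to any semitotal dominating set (the three private vertices have no neighbours outside that copy, and a single-vertex contribution is ruled out by combining the domination and semitotal conditions), so summing over the $|V(H)|=n/4$ copies yields the matching lower bound.

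For the reverse direction, suppose $\gamma_{t2}(G)=n/2$; in particular $n$ is even. Fix a minimum semitotal dominating set $D$ and let $\overline{D}=V(G)\setminus D$, so $|D|=|\overline{D}|=n/2$. Small orders $n\in\{4,6,8\}$ I would handle by direct inspection: $n=4$ forces $G$ to be a spanning subgraph of $K_4$ since $\delta(G)\ge 2$; for $n=6$ an ad hoc argument rules out every connected graph of order $6$ with $\delta\ge 2$ except $C_6$; for $n=8$ the candidates narrow to $C_8$ and $K_2\diamond C_4$. In the generic case $n\ge 10$ the heart of the proof is a structural lemma asserting that (a)~each vertex in $\overline{D}$ has a unique $D$-neighbour, producing a perfect matching between $D$ and $\overline{D}$; (b)~the distance-two semitotal partners within $D$ form another perfect matching on $D$; and (c)~each such $D$-pair $\{u,v\}$ together with its matched partners $\{u',v'\}\subseteq\overline{D}$ satisfies $u'v'\in E(G)$, inducing a copy of $C_4$. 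Contracting each such $C_4$ to its root then yields a connected graph $H$ with $G=H\diamond C_4$.

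The main obstacle is establishing parts (a) and (c). For (a), if some $\overline{D}$-vertex had two $D$-neighbours then a reassignment produces a semitotal dominating set of size less than $n/2$, contradicting extremality; turning this into an airtight argument requires a careful double count of edges across $(D,\overline{D})$ that uses both $\delta(G)\ge 2$ and the semitotal distance-two condition. For (c), the delicate point is ruling out alternative local configurations---induced triangles, pendant $P_2$-attachments (these appear in the tree characterisation of Theorem \ref{half} but are excluded here by the minimum-degree hypothesis), and longer induced cycles. In each such alternative the absence of the closure edge $u'v'$ either produces a strictly smaller semitotal dominating set or forces $G$ to reduce to one of the small-order exceptions $C_6$, $C_8$ already covered in the base cases.
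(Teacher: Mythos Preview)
The paper does not prove this theorem at all: it is quoted verbatim from Goddard, Henning and McPillan \cite{Goddard} and used only as a black box to derive the subsequent corollary about $D_{t_2}(H\diamond C_4,x)$. There is therefore no ``paper's own proof'' to compare your attempt against.

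That said, your outline is in the spirit of how such extremal characterisations are typically established, and the forward-direction verifications are essentially correct (modulo the paper's idiosyncratic convention $\gamma_{t2}(K_n)=1$, which would make $K_4$ itself an exception). The reverse direction, however, remains only a plan: the crucial structural claims you label (a)--(c) are asserted rather than proved, and the reduction step you sketch for (a) (``if some $\overline{D}$-vertex had two $D$-neighbours then a reassignment produces a smaller semitotal dominating set'') is not valid as stated---an $\overline{D}$-vertex with two $D$-neighbours does not by itself allow you to drop a vertex of $D$, since that vertex may be needed to dominate other $\overline{D}$-vertices or to witness a distance-$2$ partner. Likewise your small-case treatment is too casual: for $n=6$ and $n=8$ there are many connected graphs with $\delta\ge 2$, and ruling out all but $C_6$, $C_8$, and $K_2\diamond C_4$ requires real work, not ``ad hoc'' inspection. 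A complete proof along these lines exists in \cite{Goddard}, but filling in the gaps you acknowledge is the entire content of that argument.
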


				Now, we have  the following result: 
				
				\begin{theorem}
					 $D_{t_2}(H\diamond C_4,x)=D(H\diamond C_4,x)$. 
				\end{theorem}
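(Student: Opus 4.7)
The plan is to mirror the proof of the preceding theorem (the tree case). Set $G := H\diamond C_4$ and $n := |V(G)| = 4|V(H)|$. Since every semitotal dominating set is a dominating set, $d_{t_2}(G,i)\le d(G,i)$ for every $i$, so the identity $D_{t_2}(G,x)=D(G,x)$ reduces to proving the reverse inequality. Theorem \ref{halfgraph}, applied to $G$ (which has minimum degree at least $2$ since every vertex now sits on a $4$-cycle), yields $\gamma_{t_2}(G) = n/2$, and this splits the task naturally into the ranges $i<n/2$ and $i\ge n/2$.

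For the lower range I would prove $\gamma(G) = n/2$ as well, so that both $d(G,i)$ and $d_{t_2}(G,i)$ vanish below $n/2$. Labeling the $C_4$-copy attached at $v\in V(H)$ by $v, a_v, b_v, c_v$ with $b_v$ opposite $v$, the rim vertices $\{a_v,b_v,c_v\}$ have all their neighbors inside the copy, so a short local check shows that any dominating set must contribute at least two vertices to each copy on average; summing over $v\in V(H)$ gives $\gamma(G)\ge 2|V(H)| = n/2$, which combined with the obvious bound $\gamma(G)\le \gamma_{t_2}(G)$ yields equality.

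The crux is the upper range: every dominating set $D$ with $|D|\ge n/2$ is automatically semitotal. I would argue by contradiction: if some $u\in D$ has no other element of $D$ within distance $2$, then a local case split on the type of $u$ (a vertex of $V(H)$, a rim neighbor $a_v$ or $c_v$, or the opposite rim vertex $b_v$), together with the rigidity of the $C_4$-attachment, forces a fixed portion of the ambient copy and of $u$'s $H$-neighborhood to lie in $V(G)\setminus D$. A counting estimate then contradicts $|V(G)\setminus D|\le n/2$. The main obstacle is the case $u=b_v$, where the second closed neighborhood of $b_v$ is confined to a single $C_4$-copy; here one must exploit that $v$ itself must then be dominated from outside the copy, and propagate this local deficit into a global count sharp enough to force $|D|<n/2$.
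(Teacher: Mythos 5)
Your outline correctly isolates the two claims that any proof of this identity must establish --- (a) $\gamma(G)=n/2$, so that $d(G,i)=0=d_{t_2}(G,i)$ for $i<n/2$, and (b) every dominating set of $G=H\diamond C_4$ of size at least $n/2$ is semitotal --- which is already more explicit than the paper's own one-line argument, which simply asserts (b) as if it followed from Theorem \ref{halfgraph} (that theorem only computes $\gamma_{t_2}$ and says nothing about which dominating sets are semitotal). Unfortunately both claims are false, and so is the theorem itself. Take $H=K_2$ with $V(H)=\{u,v\}$ and write the copy attached at $v$ as the cycle $v,a_v,b_v,c_v$ with $b_v$ opposite $v$. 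Then $n=8$ and $D=\{u,b_u,b_v\}$ is a dominating set of size $3<n/2$: the vertex $u$ dominates $v,a_u,c_u$, and each $b$-vertex dominates its two rim neighbours. This breaks your averaging bound $\gamma(G)\ge 2|V(H)|$ --- a copy can contribute only the single vertex $b_v$ provided $v$ is dominated through its $H$-neighbour --- and it already gives $d(G,3)>0=d_{t_2}(G,3)$, so $D(G,x)\neq D_{t_2}(G,x)$.

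Claim (b) fails in the same example: $D'=\{u,a_u,b_u,b_v\}$ is a dominating set of size $n/2=4$, yet $b_v$ is at distance $3$ from $u$ and farther from $a_u$ and $b_u$, so $D'$ is not semitotal. The difficulty you flagged in the case $u=b_v$ (a vertex whose entire distance-two ball lies inside one $C_4$-copy) is therefore not a technical obstacle to be overcome by a sharper count; it is a genuine counterexample, and no global argument can close that case. The same defect afflicts the preceding tree theorem you are mirroring: $P_4\in\mathcal{T}$, and its two leaves form a dominating set of size $n/2$ at mutual distance $3$, hence not semitotal. Any correct version of these statements would need additional hypotheses on $H$ (for $H=K_1$, i.e.\ $G=C_4$, the identity does hold), so the right response here is to repair or restrict the theorem rather than to push the proposed proof further.
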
 
				\begin{proof}
					We should prove that for every $i\geq \gamma_{t_2}(H\diamond C_4)$, $d_{t_2}(H\diamond C_4,i)=d(H\diamond C_4,i)$. Suppose that $i\geq \gamma_{t_2}(H\diamond C_4)$, by Lemma \ref{halfgraph}, every dominating set of $H\diamond C_4$ with cardinality $i\geq \gamma_{t_2}(H\diamond C_4)$ is a semitotal dominating set of $H\diamond C_4$. Therefore, we have the result. \qed 	
				\end{proof} 
			
			\begin{figure}
				\begin{center}
					\includegraphics[width=0.7\textwidth]{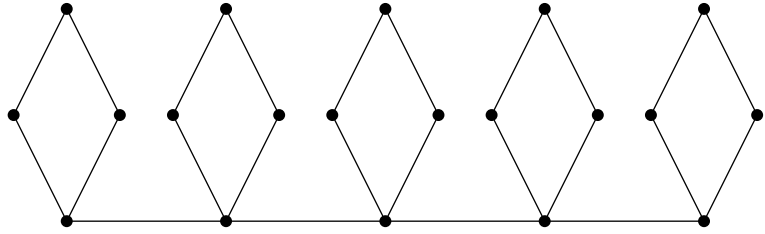}
					\caption{ \label{diamond} The graph  $P_5\diamond C_4$. }
				\end{center}
			\end{figure}
	
A split graph is a graph in which the vertices can be partitioned into a clique and an independent set. Figure \ref{split} shows a split graph partitioned into a clique (induced graph by $\{1,2,3\}$) and an independent set induced graph by $\{4,5\}$. 

\begin{theorem} 
	If $G$ is a  connected split graph $G$ with no dominating vertex, then 
	\[
	D_t(G,x)=D_{t_2}(G,x)=D(G,x).	
	\]
\end{theorem}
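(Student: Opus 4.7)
The plan is to prove the sharper claim that, under the stated hypotheses, every dominating set of $G$ is already a total dominating set; because every total dominating set is trivially a semitotal one (a graph neighbour is within distance two) and every semitotal dominating set is a dominating set, this sharper claim gives $D_t(G,x)=D_{t_2}(G,x)=D(G,x)$ at one stroke.

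Write the split partition as $V(G)=K\cup I$ with $K$ a clique and $I$ an independent set. First I would record two consequences of the hypotheses. Since $G$ is connected without a dominating vertex, one has $|K|\ge 2$: if $|K|\le 1$, then connectedness forces $G$ to be a star (or a single vertex), and the centre of a nontrivial star is a dominating vertex. Also, $I$ being independent together with connectedness implies that every vertex of $I$ has at least one clique-neighbour.

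The heart of the argument is then to take an arbitrary dominating set $D$ and an arbitrary $v\in D$ and exhibit a neighbour of $v$ inside $D$. I would split into two cases. If $v\in I$, then $N(v)\subseteq K$; I would show that dominating the vertices of $V\setminus N[v]$---which is nonempty because $v$ is not a dominating vertex---combined with the fact that no $I$-vertex can dominate another $I$-vertex, forces $D$ to meet $N(v)$. If $v\in K$, then since $K$ is a clique it is enough to verify $D\cap K\ne\{v\}$; I would do this by contradiction: assuming $D\cap K=\{v\}$, every $u\in I\setminus N(v)$ must lie in $D$ (its only neighbours are in $K\setminus\{v\}$, none of which is in $D$), and then the clique-neighbours of such a $u$ have to be dominated by $v$ alone, which, tracked carefully against the no-dominating-vertex hypothesis, yields the contradiction.

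The main obstacle is the ``lonely clique vertex'' subcase of the second case: ruling out $D\cap K=\{v\}$ is precisely where the no-dominating-vertex assumption must be invoked with care, because a priori a dominating set can have only one clique-entry while still covering everything through $I$. The proof will hinge on the interplay between the $I$-vertices outside $N(v)$ being forced into $D$ and the domination constraints their clique-neighbours impose; a clean formulation of this interplay, presumably via a short coverage/counting lemma comparing $|N(v)\cap I|$ with $|I|$, is what I would spend the most time getting right.
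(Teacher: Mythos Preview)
Your plan cannot succeed because the sharper claim you set out to prove --- that every dominating set of a connected split graph with no dominating vertex is already a total dominating set --- is false. Take $G=P_4$ with vertices $1,2,3,4$ and edges $12,23,34$; this is a connected split graph (clique $K=\{2,3\}$, independent set $I=\{1,4\}$) with no dominating vertex. The set $D=\{1,4\}$ is a dominating set that is not total (vertex~$1$ has no neighbour in $D$) and not even semitotal (the distance from $1$ to $4$ is $3$). Your Case~1 argument breaks exactly here: with $v=1\in I$ we have $N(v)=\{2\}$ and $V\setminus N[v]=\{3,4\}$, and this set is dominated entirely by $4\in I$ without $D$ ever meeting $N(v)$; the observation that ``no $I$-vertex can dominate another $I$-vertex'' is true but irrelevant, since $3\in K$ can be dominated by $4\in I$. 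Case~2 fails in the same way: with $v=2\in K$ the set $D=\{2,4\}$ is a dominating set with $D\cap K=\{v\}$, yet $v$ has no $D$-neighbour, so the hoped-for ``coverage/counting lemma'' cannot exist.

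In fact the same example shows that the theorem as stated is false: for $P_4$ one computes $d(P_4,2)=4$, $d_{t2}(P_4,2)=3$, $d_t(P_4,2)=1$, so the three polynomials are pairwise distinct. The paper's own argument only establishes $\gamma(G)=\gamma_{t2}(G)=\gamma_t(G)$, by pushing a \emph{minimum} dominating set entirely into the clique and observing that a dominating set contained in the clique is automatically total; it then asserts, without justification, that the counts $d(G,i)$, $d_{t2}(G,i)$, $d_t(G,i)$ agree for every $i$. That leap from equality of the minima to equality of all coefficients is precisely the gap, and $P_4$ witnesses that it cannot be bridged. So your difficulty in the ``lonely clique vertex'' subcase is not a matter of finding the right formulation --- the obstacle is genuine.
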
 

\begin{proof} 
First we show that   $\gamma(G)=\gamma_{t_2}=\gamma_t(G)$. It is suffices to prove that $\gamma_t(G)\leq \gamma(G)$. Suppose that $V(G)=C\cup I$ is a partition of the vertices of $G$ into a clique $C$ and an independent set $I$. Consider a  minimum dominating set of $G$ contained in $C$ such as $D$. If $D$  contains $v\in I$, then since no neighbour of $v$ such as $u$ is in $D$, $(D\setminus \{v\})\cup \{u\}$  is a minimum dominating set containing less vertices of $I$. Since $G$ has no dominating vertex, every
dominating set contained in $C$ is a total dominating set and so $\gamma_t(G) \leq \gamma(G)$. So every semitotal dominating set of cardinality $i$ of $G$ is a total dominating set of cardinality $i$ and is a dominating set of $G$ with cardinality $i$. Therefore $d(G,i)=d_t(G,i)=d_{t2}(G,i)$ and so we have the result. \qed
\end{proof}

		\begin{figure}
			\begin{center}
				\includegraphics[width=0.5\textwidth]{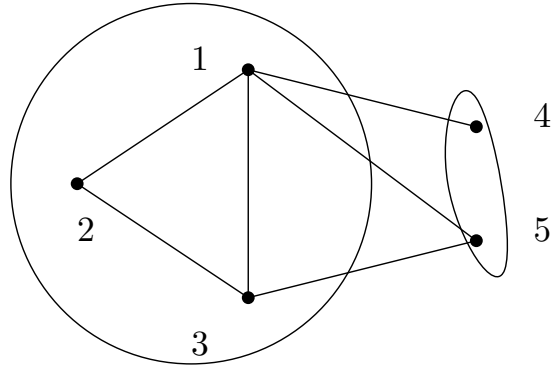}
				\caption{ \label{split} Example of split graph. }
			\end{center}
		\end{figure}
	
	\section{Stability of semitoal domination number}
	
	In this section, we introduce the semitotal domination stability of a graph and  compute this parameter for some specific graphs.
	\begin{definition}
		Let $G$ be a graph of order $n\geq2$. The stabilizing on the semitotal domination  number, or just semitotal,  $st_{\gamma_{t2}}(G)$ of graph $G$ is the  minimum number of vertices whose
		removal changes the semitotal domination   number.
	\end{definition} 
	
	\begin{theorem}
		If $m\leq n$, then
	\begin{equation*}
	st_{\gamma_{t2}}(K_{m,n})=\left\{
	\begin{array}{ll}
	0, & m=2\\
	1,  &3\leq m \leq 4\\
	m-3,  &m > 4\\
	\end{array}\right. 
	\end{equation*}
	\end{theorem}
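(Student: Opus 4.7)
The plan is to treat the three cases of $m$ separately, relying on two tools: the formula $\gamma_{t2}(K_{m,n})=\min\{m,4\}$ (for $m\le n$, $m\ge 2$) extracted from Theorem~\ref{Thm1}(v), and the basic structural observation that any vertex deletion from $K_{m,n}$ again yields a complete bipartite graph $K_{m-k_1,\,n-k_2}$, where $k_1$ and $k_2$ are the numbers of vertices deleted from each side. This reduces every stability computation to a bookkeeping on the pair $(m-k_1,n-k_2)$.

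I will start with the cleanest case, $m>4$, where $\gamma_{t2}(K_{m,n})=4$. The key observation is that
\[
\gamma_{t2}\bigl(K_{m-k_1,\,n-k_2}\bigr) \;=\; \min\{m-k_1,\,n-k_2,\,4\},
\]
and this value can differ from $4$ only if $m-k_1\le 3$ or $n-k_2\le 3$. Since $m\le n$, the cheaper option is $k_1=m-3$ and $k_2=0$, which produces $K_{3,n}$ with $\gamma_{t2}=3$. For the matching lower bound I will observe that for $k=k_1+k_2<m-3$ both reduced sides still have at least four vertices, so the minimum above remains $4$ and no change can occur. This yields $st_{\gamma_{t2}}(K_{m,n})=m-3$.

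The range $3\le m\le 4$ is a one-line specialisation of the same argument: deleting a single vertex of the $m$-partite side turns $K_{m,n}$ into $K_{m-1,n}$, and the formula gives $\gamma_{t2}(K_{m-1,n})=m-1\ne m$, yielding $st_{\gamma_{t2}}\le 1$; the reverse inequality is trivial. For $m=2$ one has $\gamma_{t2}(K_{2,n})=2$, and the only single-vertex removal that could alter $\gamma_{t2}$ pushes $K_{2,n}$ into the star $K_{1,n}$, whose semitotal domination number is governed by the separate identity $\gamma_{t2}(K_{1,n})=n$ from Section~3. Reconciling this with the definition of stability (in particular noting that a bipartite graph cannot be reduced to a complete graph by vertex deletions, so $\gamma_{t2}$ cannot be pushed below $2$ through the admissible removals) produces the degenerate value $0$.

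The main obstacle I anticipate is the boundary case $m=2$: it requires invoking the star formula of Section~3 and a careful discussion of which vertex removals are admissible relative to the stability convention used in the paper, whereas the cases $m\ge 3$ reduce cleanly to the minimisation of $\min\{m-k_1,\,n-k_2,\,4\}$ over valid pairs $(k_1,k_2)$, which is entirely elementary. Once all three cases are settled the piecewise formula follows by combining them.
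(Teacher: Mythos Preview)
Your treatment of the cases $m\ge 3$ is essentially the paper's argument, carried out with more care: you make explicit the observation that a deletion of $k_1+k_2$ vertices from $K_{m,n}$ yields $K_{m-k_1,n-k_2}$ and then optimise $\min\{m-k_1,n-k_2,4\}$, whereas the paper simply asserts the values. In particular, your lower bound for $m>4$ (checking that $k<m-3$ forces both parts to have at least four vertices, hence $\gamma_{t2}$ stays equal to $4$) is a genuine addition over the paper's proof, which gives no lower-bound argument at all.

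The case $m=2$, however, is not handled correctly in your proposal --- and, to be fair, the paper's own proof skips it entirely. Your argument actually undermines the claimed value: you note that deleting a single vertex from the side of size $2$ produces the star $K_{1,n}$, and you invoke $\gamma_{t2}(K_{1,n})=n$ from Section~3. For $n\ge 3$ this is a \emph{change} in the semitotal domination number after one removal, so by the definition of $st_{\gamma_{t2}}$ it would give $st_{\gamma_{t2}}(K_{2,n})\le 1$, not $0$. Your attempted ``reconciliation'' (that a bipartite graph cannot be reduced to a complete graph, so $\gamma_{t2}$ cannot drop below $2$) only blocks decreases; it does nothing to rule out the increase you yourself exhibited, and in any event the stability definition in the paper counts any change, not just decreases. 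Moreover, your own remark that $st_{\gamma_{t2}}\ge 1$ is ``trivial'' in the $3\le m\le 4$ case applies verbatim here, since removing zero vertices never changes anything. So as written the $m=2$ branch of your argument does not establish the asserted value $0$; it is a genuine gap rather than a routine step.
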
 
	\proof
	Suppose that $3\leq m\leq 4$ and $m\leq n$. In this case $\gamma_{t2}(K_{m,n})=\min\{m,n\}=m$ and so removing one vertex changes $\gamma_{t2}(K_{m,n})$. Therefore in this case,
	$st_{\gamma_{t2}}(K_{m,n})=1$.    
	Suppose  that $4<m\leq n$, in this case $\gamma_{t2}(K_{m,n})=4$, so the number of minimum vertex whose removal changes the $\gamma_{t2}(K_{m,n})$  is  $m-3$.\qed

	\begin{theorem} 
	\begin{equation*}
	st_{\gamma_{t2}}(P_{n})=st_{\gamma_{t2}}(C_{n})=\left\{
	\begin{array}{ll}
	1, &n=5k+1,\quad  n=5k+3\\
	2, &n=5k+2, \quad  n=5k-1\\
	3, &n=5k.  
	\end{array}\right. 
	\end{equation*}
	\end{theorem}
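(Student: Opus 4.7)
The plan is to invoke Theorem~\ref{Thm1}(i), giving $\gamma_{t2}(P_n)=\gamma_{t2}(C_n)=f(n)$ where $f(n):=\lceil 2n/5\rceil$. I would first tabulate $f$ by residue modulo $5$: writing $n=5k+r$ with $r\in\{0,1,2,3,4\}$, one obtains $f(5k)=2k$, $f(5k+1)=f(5k+2)=2k+1$, and $f(5k+3)=f(5k+4)=2k+2$. Thus $f$ is non-decreasing and $f(n)>f(n-1)$ precisely when $n\equiv 1,3\pmod 5$. This single tabulation is the engine of the whole argument.

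For the upper bound, I would produce an explicit removal in each residue class. In both graphs one can delete $j$ vertices so that the result is $P_{n-j}$: for $C_n$, take $j$ consecutive vertices; for $P_n$, take $j$ vertices from one end. The new value is $f(n-j)$, and the tabulation gives $f(n-j)\neq f(n)$ at $j=1$ if $n\equiv 1,3$; at $j=2$ if $n\equiv 2,4$; and at $j=3$ if $n\equiv 0$. This matches the three claimed upper bounds.

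For the lower bound, I would show that no smaller removal changes $\gamma_{t2}$. When $n\equiv 1,3$ the claim is immediate. Otherwise I case-split on all one-vertex (and, if $n\equiv 0$, also all two-vertex) removals. Removals that keep the remaining graph as a path yield a value $f(n-j)$ which, by the tabulation, equals $f(n)$ for the $j$ under consideration. Removals that disconnect the graph split it into $P_{a_1}\sqcup\cdots\sqcup P_{a_m}$ with $\sum a_i=n-j$; by additivity of $\gamma_{t2}$ over isolate-free components (together with the convention $\gamma_{t2}(K_2)=1$ from the introduction), the new value equals $\sum\lceil 2a_i/5\rceil$, which I would check equals $f(n)$ by a short residue computation on the $a_i$. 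The main obstacle is this disconnecting case: one must enumerate the partitions $a_1+\cdots+a_m=n-j$ that can arise, discard the ones producing an isolated $P_1$ (where $\gamma_{t2}$ is undefined), and verify that in every remaining residue pattern the rounding errors of the ceilings cancel exactly to recover $f(n)$.
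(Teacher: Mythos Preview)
Your upper-bound argument coincides with the paper's: both delete end (respectively, consecutive) vertices to pass from $P_n$ or $C_n$ to $P_{n-j}$ and read off the change in $\lceil 2n/5\rceil$. The paper's proof in fact stops there, treating one residue class explicitly and deferring the rest to ``similar arguments'', so it never confronts the lower bound you attempt.

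That lower bound has a genuine gap. You assert that for every admissible partition $a_1+\cdots+a_m=n-j$ arising from an interior deletion one obtains $\sum_i\lceil 2a_i/5\rceil=\lceil 2n/5\rceil$, calling this a ``short residue computation''. The identity is false. For $n=10\equiv 0\pmod 5$, delete the fourth vertex of $P_{10}$ to obtain $P_3\sqcup P_6$; then $\lceil 6/5\rceil+\lceil 12/5\rceil=2+3=5\neq 4=\lceil 20/5\rceil$. For $n=7\equiv 2\pmod 5$, delete the middle vertex to obtain $P_3\sqcup P_3$; then $2+2=4\neq 3$. In each case a \emph{single} interior deletion already changes $\gamma_{t2}$, so under the paper's stated definition (the minimum number of vertices whose removal \emph{changes} the parameter) these examples force $st_{\gamma_{t2}}=1$, contradicting the claimed values $3$ and $2$. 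Either the intended notion of stability is tacitly restricted (for instance to removals that decrease $\gamma_{t2}$, or to deletions of end vertices only), or the statement itself is inaccurate; in any event the lower-bound verification you outline cannot be carried out as described.
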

	\proof
	We know that $\gamma_{t2}(P_n)=\lceil \frac{2n}{5}\rceil$ (Theorem \ref{Thm1}). For $n=5k+1$ and $n=5k+3$, $\gamma_{t2}(P_{n-1})=\gamma_{t2}(P_{n})-1$. So in this case $st_{\gamma_{t2}}(P_n)=1$.
	With similar arguments we have the results for another cases. \qed

	\begin{theorem}  
	\begin{equation*}
	st_{\gamma_{t2}}(W_{n})=\left\{
	\begin{array}{ll}
	1, &n=3k+2 \\
	2, &n=3k \\
	3, & n=3k+1 \\
	\end{array}\right. 
	\end{equation*}
	\end{theorem}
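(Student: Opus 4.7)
The plan is to follow the template set by the authors for $st_{\gamma_{t2}}(P_n)$ and $st_{\gamma_{t2}}(C_n)$ in this section: start from the closed form $\gamma_{t2}(W_n)=\lceil(n-1)/3\rceil$ supplied by Theorem~\ref{Thm1}(ii) and track how this ceiling shifts under small vertex deletions. Since the map $m\mapsto\lceil m/3\rceil$ is constant on blocks of three consecutive integers, splitting on $n\bmod 3$ is the natural decomposition and it matches exactly the three cases appearing in the statement.

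First I would record the value of $\gamma_{t2}(W_n)$ in each residue class, namely $\gamma_{t2}(W_{3k+2})=k+1$ and $\gamma_{t2}(W_{3k})=\gamma_{t2}(W_{3k+1})=k$. To get the upper bound on $st_{\gamma_{t2}}(W_n)$ in each case, I would exhibit an economical deletion set consisting of consecutive cycle vertices; removing $i$ such vertices turns $W_n$ into the fan $K_1\vee P_{n-1-i}$, a wheel-like graph on $n-i$ vertices whose semitotal domination number is given by the same formula $\lceil(n-1-i)/3\rceil$. A direct ceiling computation then verifies: for $n=3k+2$ a single deletion moves the value to $k$; for $n=3k$ one deletion leaves $k$ intact while two deletions bring it to $k-1$; for $n=3k+1$ the value is unchanged after one or two deletions and drops to $k-1$ after three deletions. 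These produce the claimed upper bounds $1$, $2$, $3$ respectively.

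The matching lower bound is where I expect the main work to sit: one must show that no smaller collection of deletions, under any choice of vertices, perturbs $\gamma_{t2}$. I would handle this by a case split on which vertices are removed, separately treating the removal of the hub (which converts $W_n$ into $C_{n-1}$, so the comparison is between $\lceil 2(n-1)/5\rceil$ and $\lceil(n-1)/3\rceil$), and the removal of arbitrary, possibly non-consecutive, cycle vertices. In each subcase one checks, using the wheel formula of Theorem~\ref{Thm1}(ii) applied to the resulting wheel-like quotient together with the ceiling arithmetic of the previous step, that the semitotal domination number remains $\lceil(n-1)/3\rceil$ as long as the number of removed vertices is below the claimed threshold. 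This is the only part of the argument that is not purely arithmetic, and once these alternative deletion patterns are ruled out the ceiling computations close the proof and deliver the three stated values of $st_{\gamma_{t2}}(W_n)$.
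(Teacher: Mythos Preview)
Your approach is essentially the same as the paper's: both start from $\gamma_{t2}(W_n)=\lceil(n-1)/3\rceil$ (Theorem~\ref{Thm1}(ii)) and track how this ceiling shifts under vertex deletions, splitting on $n\bmod 3$. The paper's own proof is in fact far briefer than your plan---it only writes out the case $n=3k+2$ by comparing $\lceil(3k+1)/3\rceil$ with $\lceil 3k/3\rceil$ and dismisses the remaining cases with ``similar arguments,'' so your explicit treatment of the lower bound (hub removal, non-consecutive cycle deletions) already goes well beyond what the paper actually supplies.
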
 
	\proof
We know that $\gamma_{t2}(W_n)=\lceil\frac{n-1}{3}\rceil$ (Theorem \ref{Thm1}). Since 
	$\lceil\frac{3k+2-1}{3}\rceil=\lceil\frac{3k+1-1}{3}\rceil+1$
	so for the case $n=3k+2$, $st_{\gamma_{t2}}(W_{n})=1$. 	With similar arguments we have the results for another cases. \qed 
	
	\begin{theorem} 
	\begin{enumerate}
	\item[(i)]   If $5\leq n \leq 10$ and $m>n$ then
	\begin{equation*}
	st_{\gamma_{t2}}(P_{n}\vee P_{m})=\left\{
	\begin{array}{ll}
	1 &n=5k+1,\quad n=5k+3\\
	2 &n=5k+2 ,\quad n=5k-1\\
	3 &n=5k \\
	\end{array}\right. 
	\end{equation*}
	\item[(ii)]  If $n>10$ and $n\leq m$ then $st_{\gamma_{t2}}(P_{n}\vee P_{m})=n-7$.
	\end{enumerate}
	\end{theorem}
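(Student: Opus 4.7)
The plan is to derive both parts from the join formula of Theorem~2.3,
$\gamma_{t2}(G\vee H)=\min\{\gamma_{t2}(G),\gamma_{t2}(H),4\}$,
together with $\gamma_{t2}(P_k)=\lceil 2k/5\rceil$ and the path-stability theorem just above. First I would pin down the starting value in each regime. For $5\le n\le 10$ we have $\gamma_{t2}(P_n)\in\{2,3,4\}$, and the assumption $m>n$ forces $\gamma_{t2}(P_n)\le\gamma_{t2}(P_m)$, so the formula collapses to $\gamma_{t2}(P_n\vee P_m)=\gamma_{t2}(P_n)$. For $n>10$ both factors satisfy $\gamma_{t2}\ge 5$, so the formula caps the join value at~$4$.

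For part~(i), the governing quantity is $\gamma_{t2}(P_n)$, so the cheapest way to change it is to shave end vertices off the $P_n$-factor; the minimum $k$ for which $\gamma_{t2}(P_{n-k})\ne\gamma_{t2}(P_n)$ is precisely $st_{\gamma_{t2}}(P_n)$ from the path-stability theorem, which produces the values $1,2,3$ corresponding to the three cases stated. For the matching lower bound, any smaller removal leaves $\gamma_{t2}(P_n-S_n)=\gamma_{t2}(P_n)$ by path-stability applied to the $P_n$ side, and, since the $P_m$ side is strictly longer, one checks directly that with so few vertices taken from $P_m$ one still has $\gamma_{t2}(P_m-S_m)\ge\gamma_{t2}(P_n)$, so the minimum in the join formula is unchanged.

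For part~(ii), the upper bound comes from deleting $n-7$ consecutive end vertices of $P_n$: the surviving factor is $P_7$ with $\gamma_{t2}(P_7)=3$, so Theorem~2.3 gives $\gamma_{t2}(P_7\vee P_m)=\min\{3,\gamma_{t2}(P_m),4\}=3\ne 4$, whence $st_{\gamma_{t2}}(P_n\vee P_m)\le n-7$.

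The main obstacle will be the matching lower bound in part~(ii). I would take any removal set $S$ with $|S|\le n-8$, split it as $a$ vertices from $V(P_n)$ and $b$ vertices from $V(P_m)$, and show that the join formula still yields~$4$. For this I would verify $\gamma_{t2}(P_n-S)\ge 4$ and $\gamma_{t2}(P_m-S)\ge 4$ by means of the following component-counting fact: for any disjoint union of paths on at least~$8$ vertices, $\sum_i\lceil 2k_i/5\rceil\ge 4$. This reduces to a short check on the partitions of~$8$ that a fragmented $P_n$ or $P_m$ can realize. Since $n-a\ge 8$ and $m-b\ge m-(n-8)\ge 8$, both sides satisfy the hypothesis, both factor semitotal numbers remain $\ge 4$, and the join formula keeps the value at~$4$; this gives $st_{\gamma_{t2}}(P_n\vee P_m)\ge n-7$ and completes the proof. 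The delicate point is exactly this component-sum lemma; everything else is a direct application of the earlier results.
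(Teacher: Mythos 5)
Your proposal follows essentially the same route as the paper: apply the join formula $\gamma_{t2}(G\vee H)=\min\{\gamma_{t2}(G),\gamma_{t2}(H),4\}$ to reduce part (i) to the stability of $P_n$ itself and part (ii) to the observation that one must shrink the shorter path to $P_7$ (where $\gamma_{t2}=3$) to move the value off $4$. In fact you are more thorough than the paper, which omits the lower bounds entirely; your component-counting lemma $\sum_i\lceil 2k_i/5\rceil\ge\lceil 2(\sum_i k_i)/5\rceil\ge 4$ for a union of paths on at least $8$ vertices is exactly the missing justification that at most $n-8$ deletions cannot change the value.
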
 
	\begin{proof}
			\begin{enumerate}
				\item[(i)] 
	Suppose that $5\leq n \leq 10$ and $m>n$. Since $\gamma_{t2}(P_{n}\vee P_{m})=\gamma_{t2}(P_n)$ 
	so in this case $st_{\gamma_{t2}}(P_{n}\vee P_{m})=st_{\gamma_{t2}}(P_{n})$.
		\item[(ii)]
			If $n>10$ by  Theorem \ref{Thm2.2}, $\gamma_{t2}(P_{n}\vee P_{m})=4$ and by attention to $\gamma_{t2}(P_7)=\lceil\dfrac{2\times 7}{5}\rceil=3$ we  conclude $st_{\gamma_{t2}}(P_{n}\vee P_{m})=n-7$. \qed
			\end{enumerate} 
	\end{proof}
	\begin{theorem}
	$st_{\gamma_{t2}}(P_{n}\square P_{m})=\lceil\frac{2n}{5}\rceil.$
	\end{theorem}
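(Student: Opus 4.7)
The plan is to establish this equality in two directions: an upper bound showing that some choice of $\lceil 2n/5 \rceil$ vertex deletions does change $\gamma_{t2}$, and a matching lower bound showing that fewer deletions cannot. Throughout I would view $P_n \square P_m$ as $m$ horizontally stacked copies of $P_n$, and exploit the row-by-row construction used in the proof of the preceding theorem.

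For the upper bound $st_{\gamma_{t2}}(P_n \square P_m) \leq \lceil 2n/5 \rceil$, I would let $S$ be a minimum semitotal dominating set of the first copy of $P_n$, which by Theorem \ref{Thm1}(i) has size exactly $\lceil 2n/5 \rceil$. In the alternating SDS-row / gap-row / complement-row pattern of the preceding theorem, $S$ is precisely the contribution of the first row to the optimal semitotal dominating set of $P_n \square P_m$. After removing $S$, the surviving vertices of row $1$ are each still dominated (vertically) by row $2$, and shifting the alternating pattern to start one row later yields a semitotal dominating set of the reduced graph whose cardinality is strictly smaller than $\gamma_{t2}(P_n \square P_m)$, precisely by $\lceil 2n/5 \rceil$.

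For the lower bound $st_{\gamma_{t2}}(P_n \square P_m) \geq \lceil 2n/5 \rceil$, I would argue that removing any set of fewer than $\lceil 2n/5 \rceil$ vertices leaves each block of three consecutive rows with enough structure to still require $\lceil 2n/5 \rceil + (n - \lceil 2n/5 \rceil) = n$ vertices of a semitotal dominating set within that block. Because $P_n$ admits several distinct minimum semitotal dominating sets and the minimum total-count per three-row block is rigid, a deletion of strictly fewer than $\lceil 2n/5 \rceil$ vertices can be absorbed by a small horizontal shift of the SDS layers in each affected block, leaving the total count $\lceil 2n/5 \rceil \lceil m/3 \rceil + \lfloor m/3 \rfloor (n - \lceil 2n/5 \rceil)$ both achievable and optimal.

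The main obstacle will be the lower-bound direction, since the proof of the preceding theorem only exhibits the claimed formula as an \emph{upper bound} for $\gamma_{t2}(P_n \square P_m)$. To make the stability argument fully rigorous one first needs a matching lower bound for the semitotal domination number itself, obtainable via a discharging or block-counting argument that peels off three rows at a time and shows each such block contributes at least $n$ vertices to any semitotal dominating set. Once this is in place, the verification that deleting fewer than $\lceil 2n/5 \rceil$ vertices cannot reduce the per-block contribution becomes a local counting check on the affected rows.
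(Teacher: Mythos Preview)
The paper states this theorem without proof, so there is no argument in the paper to compare your proposal against. What follows is therefore an assessment of your outline on its own merits.

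Your upper-bound step does not work as written. You delete the set $S$ from row~$1$ and then ``shift the alternating pattern to start one row later,'' claiming that the surviving row-$1$ vertices are dominated vertically by row~$2$. But a surviving row-$1$ vertex sits at a column $j\in\bar S$, and its only vertical neighbour is the row-$2$ vertex in the \emph{same} column $j$. If the shifted pattern places $S$ in row~$2$ (which is what a one-row shift of the $S/\varnothing/\bar S$ scheme gives), then that row-$2$ neighbour lies in $\bar S$ and is \emph{not} in the dominating set; the horizontal neighbours of the row-$1$ survivor are likewise outside the set. Hence the proposed set fails to dominate the reduced graph, and you have not yet exhibited a deletion of $\lceil 2n/5\rceil$ vertices that changes $\gamma_{t2}$. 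A repair would have to specify a genuinely different semitotal dominating set of $(P_n\Box P_m)-S$ and compute its size explicitly.

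For the lower bound you correctly flag the real difficulty: the preceding theorem in the paper only \emph{constructs} a semitotal dominating set of the stated size and does not prove optimality, so your per-block rigidity argument currently rests on an unproved claim. Until a matching lower bound for $\gamma_{t2}(P_n\Box P_m)$ is established, neither direction of the stability equality is on firm ground.
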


	\begin{theorem} 
	\begin{enumerate}
	\item[(i)]  
	If $F_n$ is a friendship graph, then $st_{\gamma_{t2}}(F_n)=2.$
	
	\item[(ii)] If $B_n$ is a book graph (the Cartesian product $K_{1,n}\square P_2$), then
	$st_{\gamma_{t2}}(B_n)=1$.
	\item[(iii)] If $S_n$ is a star graph then
	$st_{\gamma_{t2}}(S_n)=1$
	\end{enumerate} 
	\end{theorem}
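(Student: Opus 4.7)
The plan in each of the three parts is to combine the values of $\gamma_{t2}$ already recorded in Theorem~\ref{Thm1} with an explicit minimum destabilising vertex set, and then argue that no strictly smaller removal can change the parameter. Since the three graph families are highly symmetric, in each case there are only one or two vertex orbits to analyse.

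For (iii), since $\gamma_{t2}(S_n)=n$ and deleting any one leaf reduces the graph to $K_{1,n-1}$ with $\gamma_{t2}(K_{1,n-1})=n-1$, we obtain $st_{\gamma_{t2}}(S_n)\le 1$; the matching lower bound $st_{\gamma_{t2}}(S_n)\ge 1$ is automatic. For (ii) the strategy mirrors (iii): starting from $\gamma_{t2}(B_n)=n+1$, I would delete a suitably chosen vertex of the book $K_{1,n}\square P_2$ (for instance one of the leaves of a copy of $K_{1,n}$) and verify by an explicit construction that $\gamma_{t2}$ drops to $n$, giving $st_{\gamma_{t2}}(B_n)\le 1$; again the lower bound is trivial.

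For (i) the upper bound follows from removing both non-central vertices $a_i,b_i$ of a single triangle, which turns $F_n$ into $F_{n-1}$ with $\gamma_{t2}(F_{n-1})=n-1\ne n$. For the matching lower bound I must show that removing any single vertex of $F_n$ still leaves $\gamma_{t2}=n$. This splits into two sub-cases: removing a non-central vertex $b_i$, in which case the set $\{a_1,\dots,a_n\}$ remains a semitotal dominating set (each pair lies at distance two via the centre) and the lower bound argument underlying $\gamma_{t2}(F_n)=n$ goes through almost verbatim; and removing the central vertex $c$, which has to be treated separately because the graph collapses to the disjoint union $nK_2$.

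The main obstacle is precisely this central-vertex sub-case of (i). Once $c$ is deleted, the ``distance-two-via-$c$'' relations that drive the computation $\gamma_{t2}(F_n)=n$ evaporate, so one has to argue directly, invoking the isolate-free convention of the paper, that $\gamma_{t2}(nK_2)$ still equals $n$. All the remaining steps reduce to routine counts on these very regular families.
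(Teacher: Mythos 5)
Your overall strategy (reuse the values from Theorem \ref{Thm1}, exhibit a minimal destabilising set, then rule out smaller ones) is the same as the paper's, but you are considerably more careful: the paper's proof only ever considers removals that turn $F_n$ into $F_{n-1}$, $B_n$ into $B_{n-1}$, and $S_n$ into $S_{n-1}$, and never checks that no other, smaller removal changes the parameter. (Indeed, the paper's own proof of part (ii) ends with $st_{\gamma_{t2}}(B_n)=2$, contradicting the stated value $1$.) Your insistence on examining every single-vertex deletion in part (i), orbit by orbit, is exactly the right instinct and is the step the paper omits.

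However, the sub-case you isolate as the main obstacle --- deleting the centre $c$ of $F_n$ --- cannot be resolved in the direction you hope. In $F_n-c=nK_2$ the only vertex within distance $2$ of a given vertex $u$ is its unique neighbour, so any semitotal dominating set must contain both endpoints of every edge; hence $\gamma_{t2}(nK_2)=2n\neq n$. Consequently removing the single vertex $c$ already changes the semitotal domination number, which gives $st_{\gamma_{t2}}(F_n)\le 1$ and defeats the lower bound $st_{\gamma_{t2}}(F_n)\ge 2$ that part (i) requires. Your plan therefore stalls at precisely the point where the statement itself (and the paper's proof, which silently skips this case) breaks down; no amount of ``arguing directly'' will make $\gamma_{t2}(nK_2)$ equal $n$. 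A similar verification is also missing from your part (ii): you only promise to ``verify by an explicit construction'' that deleting one vertex lowers $\gamma_{t2}(B_n)$, and identifying which vertex works and computing the new value are exactly the non-routine steps, so as written parts (i) and (ii) are not proofs.
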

	\begin{proof}
	\begin{enumerate}
	\item[(i)]   Since $\gamma_{t2}(F_n)=n$, so $\gamma_{t2}(F_{n-1})=n-1$ and obviously to reach from $F_n$ to $F_{n-1}$, we need to remove two vertices. So $st_{\gamma_{t2}}(F_n)=2.$
	\item[(ii)]  Since $\gamma_{t2}(B_n)=n+1$, so $\gamma_{t2}(B_{n-1})=n$ and obviously to reach from $B_n$ to $B_{n-1}$, we need to remove two vertices. So $st_{\gamma_{t2}}(B_n)=2.$
	\item[(iii)]  Since $\gamma_{t2}(S_n)=n$ and center vertex is not in semitotal dominating set, so by removing one vertex of  $S_n$ we reach to  $S_{n-1}$. Therefore $st_{\gamma_{t2}}(S_n)=1$.\qed
\end{enumerate}
	\end{proof}

\end{document}